\newcommand{\mC}{\mathcal{C}}
\newcommand{\mK}{\mathcal{K}}
\newcommand{\mG}{\mathcal{G}}
\newcommand{\ZZ}{\mathbb{Z}}
\newcommand{\CC}{\mathbb{C}}
\newcommand{\QQ}{\mathbb{Q}}
\newcommand{\RR}{\mathbb{R}}
\newcommand{\Map}{\mbox{Map}}
\newcommand{\rot}{\mbox{rot}}
\newcommand{\ev}{\mbox{ev}}
\newcommand{\w}{\omega}
\newtheorem{thm}{Theorem}[section]
\newtheorem{dfn}[thm]{Definition}
\newtheorem{cor}[thm]{Corollary}
\newtheorem{lma}[thm]{Lemma}
\newtheorem{prp}[thm]{Proposition}
\newtheorem{clm}[thm]{Claim}
\newtheorem{rmk}[thm]{Remark}
\begin{document}

\title
{The symplectic mapping class group of $\CC P^2  \# n{\overline {\CC P^2}}$ with  $n\leq4$}

\author{ Jun Li, Tian-Jun Li, Weiwei Wu}

\address{School  of Mathematics\\  University of Minnesota\\ Minneapolis, MN 55455}
\email{lixx1727@umn.edu}

\address{School  of Mathematics\\  University of Minnesota\\ Minneapolis, MN 55455}
\email{tjli@math.umn.edu}

\address{Department of Mathematics\\  Michigan State University\\ East Lansing, MI}
\email{weiwei@math.msu.edu }

\date{\today}

\begin{abstract}
In this paper we  prove that the Torelli part of the  symplectomorphism groups
of
the $n$-point ($n\leq 4$) blow-ups of the projective plane is trivial.
Consequently, we determine the symplectic mapping class group. It is generated by reflections  on $K_{\omega}-
$spherical class with zero $\omega$ area.
\end{abstract}

\maketitle
\tableofcontents

\section{Introduction}

A symplectic manifold $(X,\omega)$ is an even dimensional manifold $X$ with a closed, nondegenerate two form $\omega$.
The symplectomorphism group of $(X,\omega)$, denoted by $Symp(X,\omega)$, is the group of diffeomorphisms $\phi$ of $M$
which preserve $\omega$, and is given the $C^{\infty}$-topology. $Symp(X,\omega)$ is an
infinite dimensional Fr\'echet Lie group.

For a closed $4-$dimensional symplectic manifold $(X,\omega)$, since Gromov's
 work \cite{Gro85}, the homotopy type of $Symp(X,\omega)$ has attracted much interest over the past 30 years.
For the special case of some monotone $4-$manifolds, the (rational) homotopy of
$Symp(X,\omega)$ was fully computed in \cite{Gro85, AM99, Eva11}. However, for an arbitrary symplectic 4 manifold,
 the complication grows drastically: for $S^2\times S^2$,
see \cite{Abr98, AM99, Anj02}; and \cite{AP12} for other instances.

The goal of this note is modest: for some rational $4-$manifolds, we compute $\pi_0(Symp(X,\omega))$,
which is the symplectic mapping class group (denoted as SMC for short). In the cases we consider, the homological action of
$Symp(X,\omega)$ is already known in \cite{LW11}. Therefore it suffices  to describe  $\pi_0(Symp_h(X,\omega))$,
which is the subgroup of $Symp(X,\omega)$ acting trivially on homology, namely, its Torelli part.

\begin{thm}\label{main}

 $Symp_h(X,\omega)$ is connected for  $X= \CC P^2  \# 4 {\overline {\CC P^2}}$ with arbitrary symplectic form $\omega$.
\end{thm}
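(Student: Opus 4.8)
The plan is to argue by induction on the number $n$ of blow-ups. The base case $n=0$ is Gromov's theorem \cite{Gro85} that $Symp(\CC P^2,\omega_{FS})$ deformation retracts onto $PU(3)$, so it is connected and $Symp_h=Symp$. Assuming the statement for $\CC P^2\#k\overline{\CC P^2}$ with every $k<n$ and every symplectic form, I want to deduce it for $n$; since the induction calls on the hypothesis for all smaller $k$, one in effect establishes the family of statements for $n\le 4$ simultaneously. Before running the induction I would normalize $\omega$: by Moser's theorem $Symp_h(X,\omega)$ depends, up to homotopy equivalence, only on the deformation class of $\omega$, and by the known structure of the symplectic cone of rational surfaces every form on $X$ is diffeomorphic to a positive multiple of a \emph{reduced} form (period vector $(\lambda;\delta_1\ge\cdots\ge\delta_n)$ with all $\delta_i>0$ and the reducedness inequalities, e.g.\ $\lambda>\delta_1+\delta_2+\delta_3$ when $n\ge3$); so it suffices to treat one reduced representative, and one may freely deform $\omega$ within its chamber when that is convenient.

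Next I would set up the fibration machinery that reduces the problem to a smaller blow-up. Fix an exceptional class $E$ (so $\omega\cdot E>0$, $K_\omega\cdot E=-1$); by Taubes' $SW=Gr$ together with Li--Liu wall-crossing, for generic $\omega$-compatible $J$ the class $E$ --- indeed a whole configuration $\mC_0$ of negative spheres forming the proper transform of a standard exceptional divisor --- is represented by embedded $J$-holomorphic spheres, and any embedded symplectic sphere in class $E$ has a standard symplectic neighborhood, exhibiting $(X,\omega)$ as a blow-up of some $(X',\omega')$ with $X'=\CC P^2\#(n-1)\overline{\CC P^2}$. Let $\mE$ be the space of symplectically embedded spheres in class $E$. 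The evaluation $g\mapsto g(C)$ realizes $Symp_h(X,\omega)$ as a fibration over the orbit $Symp_h(X,\omega)\cdot C$, which is open and closed in $\mE$, with fiber the stabilizer $Symp_h(X,\omega)\cap\mathrm{Stab}(C)$; and the stabilizer of $C$ together with a standard neighborhood sits in a Gromov-type fibration over a contractible frame space with fiber $Symp_h(X',\omega')$, which is connected by the inductive hypothesis. Hence the whole problem reduces to showing that $\mE$ is connected.

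The hard part is this last point, where the space $\mJ_\omega$ of $\omega$-compatible almost complex structures enters. I would stratify the contractible space $\mJ_\omega$ according to which subsets of the (finite, because $n\le 4$) list of negative symplectic sphere classes --- squares $-1,-2,\dots$, organized by the relevant root system --- are simultaneously realized by embedded $J$-holomorphic spheres, and with which intersection/nodal pattern. Using automatic transversality for spheres in dimension four (Hofer--Lizan--Sikorav) one shows each stratum is a connected submanifold of predictable codimension, with the top stratum open, dense and connected and every lower stratum of real codimension $\ge2$. Then, given two embedded spheres $C_0,C_1$ in class $E$, I would pick $J_i$ making $C_i$ holomorphic, join $J_0$ to $J_1$ by a path in $\mJ_\omega$ transverse to the stratification, follow the resulting parametrized Gromov moduli space along the path, and upgrade the family of curves to an ambient symplectic isotopy by a Moser argument --- using the codimension bounds to push the path into the top stratum, where transitivity of the $Symp_h$-action is immediate. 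Together with the fibration above and the inductive hypothesis this gives connectedness of $Symp_h(X,\omega)$.

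I expect the main obstacle to be the wall-crossing analysis concealed in that last step: controlling the full stratification of $\mJ_\omega$, proving that every stratum --- including those where $E$ breaks into a chain of $-1$ and $-2$ spheres --- is connected, and checking that such degeneration loci have codimension $\ge2$, so that generic paths avoid them while $Symp_h$-transitivity survives a crossing. The combinatorics of this stratification --- how many negative classes there are and which nodal configurations among them can occur --- is precisely what makes the statement subtle and, at present, limits it to $n\le4$.
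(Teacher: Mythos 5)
Your route (induction on the number of blow-ups via blowing down an exceptional sphere, plus a stratification of $\mathcal{J}_\omega$ to show the space of exceptional spheres is connected) is genuinely different from the paper's, but it has two gaps that I do not think can be patched without importing much stronger input than your inductive hypothesis. First, the normalization step is not justified: Moser's theorem only says that \emph{cohomologous} deformations of $\omega$ preserve $Symp(X,\omega)$ up to conjugation; it is false that the homotopy type (or even $\pi_0$) of $Symp_h(X,\omega)$ depends only on the deformation class of $\omega$ --- the ruled-surface computations of Gromov, McDuff and Abreu, and Seidel-type phenomena, show the answer genuinely changes as $[\omega]$ moves in the cone, so ``deform freely within the chamber'' is not available.

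Second, and more seriously, the inductive step does not close as stated. The stabilizer $Stab(C)$ of an exceptional sphere $C$ does not fiber over a contractible frame space with fiber $Symp_h(X',\omega')$. After quotienting by $Symp(C)$ and by the gauge group of the normal bundle, what remains is (a group weakly equivalent to) $Symp_c(X\setminus C)$, i.e.\ compactly supported symplectomorphisms of the blow-down minus a closed ball --- not $Symp_h(X',\omega')$. To compare $\pi_0$ of that group with $\pi_0(Symp_h(X'))$ you must run the fibration $Stab(B)\rightarrow Symp_h(X')\rightarrow \mathrm{Emb}(B)$ over the space of symplectic ball embeddings in $X'$, and its exact sequence shows that connectedness of the stabilizer requires surjectivity of $\pi_1(Symp_h(X'))\rightarrow \pi_1(\mathrm{Emb}(B))$; that space of balls is not contractible and its $\pi_1$ is exactly the hard computation of Lalonde--Pinsonnault and Anjos--Pinsonnault. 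So an induction that carries only $\pi_0$ at each stage cannot succeed; you would have to carry $\pi_1$ (of the symplectomorphism groups or of the ball/sphere spaces) along the induction, together with how the connecting maps act. This is precisely the difficulty the paper sidesteps: it fixes a single full, stable, standard configuration $C$ whose complement is biholomorphic to $\CC^*\times\CC$, invokes Evans' weak contractibility of $Symp_c(\CC^*\times\CC)$ (extended to irrational forms by a blow-down/enlarge/blow-up approximation, not by deformation invariance), and then kills the remaining $\pi_0$-ambiguity by an explicit computation showing the connecting map $\pi_1(Symp(C))\rightarrow\pi_0(Stab^0(C))$ is surjective via the gauge-group generators for type I, II, III configurations. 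Your stratification argument for connectedness of the space of spheres in class $E$ is plausible (and essentially known from Li--Wu), but it is the smallest part of the problem; the missing $\pi_1$-control in the blow-down fibration is where the proposal fails.
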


The cases $S^2\times S^2$ and  $(\CC P^2  \# k {\overline {\CC P^2}})$ with  $k\leq 3$ are known before.
Our approach actually works in a uniform way for all $k\leq 4$ (See discussions
in remark \ref{kleq3}). One also note that Theorem \ref{main} is not true in general for
$k\geq5$, see Seidel's famous example in \cite{Sei08}.

Our strategy   is based on Evans' beautiful approach
in \cite{Eva11} by systematically exploring the geometry of certain stable configuration of symplectic spheres
(a related approach first appeared in Abreu's paper \cite{Abr98}).
It is summarized by the following diagram:

\begin{equation} \label{summary}
\begin{CD}
Symp_c(U) @>>>  Stab^1(C) @>>> Stab^0(C) @>>> Stab(C) @>>> Symp_h(X) \\
@.@. @VVV @VVV @VVV \\
@.@. \mG(C) @. Symp(C) @. \mC_0
\end{CD}
\end{equation}
Here $\mC_0$ is the space of a full stable standard configuration of fixed homological type.
Every other term in diagram \eqref{summary} is a group associated to $C \in \mC_0 $, and $U=X\setminus C$.
Now we give the definition of stable standard spherical configurations and the groups will be discussed
later in section \ref{group}.

\begin{dfn}\label{stable conf}
Given a symplectic 4-manifold $(X,\omega)$, we call an ordered finite collection of symplectic
spheres $\{C_i, i=1, ...,n\}$  a spherical symplectic configuration, or simply a {\bf configuration}  if

1. for any pair  $i,j$ with $i\ne j$, $[C_i]\ne [C_j]$ and $[C_i] \cdot [C_j]=0$ or $1$.

2. they are simultaneously $J-$holomorphic for some $J \in \mathcal J_{\omega}$.

3. $C=\bigcup C_i$ is connected.

We will often use $C$ to denote  the configuration. The homological type of $C$ refers to the set of homology classes $\{[C_i]\}$.

Further, a   configuration  is called
\begin{itemize}

\item   {\bf standard} if the components intersect
$\omega$-orthogonally
 at every intersection point of the configuration.
 Denote by $\mC_0$  the space of  standard configurations having the same homology type as $C$.

\item {\bf stable} if  $[C_i] \cdot [C_i] \geq -1$ for each $i$.

\item  {\bf full} if  $H^2(X,C;\RR)=0$.

\end{itemize}
\end{dfn}

It is shown in \cite{LW11} that  for a rational manifold, the homological action of $Symp(X,\omega)$ is
generated by Lagrangian Dehn twists. Therefore, Theorem \ref{main} implies:
\begin{cor}
For a rational manifold with Euler number up to $7$, the SMC is a finite group generated by
Lagrangian Dehn twists. Moreover, a generating set corresponds to
a finite set of $K_{\omega}-$null spherical classes with   zero $\omega-$area.
In particular, SMC is  trivial for generic choice of $\omega$.
\end{cor}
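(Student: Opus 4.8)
The plan is to derive this Corollary from Theorem~\ref{main}, from its previously established analogues for $S^2\times S^2$ and for $\CC P^2\#k\overline{\CC P^2}$ with $k\le 3$, and from the description of the homological action of $Symp(X,\omega)$ in \cite{LW11}. First I would reduce to a finite list of manifolds: a closed rational $4$-manifold $X$ with $\chi(X)\le 7$ is diffeomorphic to $S^2\times S^2$ or to $\CC P^2\#n\overline{\CC P^2}$ with $0\le n\le 4$, since $\chi(\CC P^2\#n\overline{\CC P^2})=3+n$ while $\chi(S^2\times S^2)=4$. For each such $(X,\omega)$ consider the natural homomorphism $\rho\colon \pi_0(Symp(X,\omega))\to \Aut\big(H_2(X;\ZZ)\big)$; by definition its kernel is $\pi_0(Symp_h(X,\omega))$, which vanishes by Theorem~\ref{main} when $n=4$ and by the known cases otherwise. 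Hence $\rho$ is injective and $\pi_0(Symp(X,\omega))\cong \Gamma_\omega:=\operatorname{im}\rho$.

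Next I would show $\Gamma_\omega$ is finite. It preserves the intersection form, the symplectic canonical class $K_\omega$, and the class $[\omega]$. Since $[\omega]^2>0$, the stabilizer of $[\omega]$ inside $O\big(H_2(X;\RR)\big)\cong O(1,b^-)$ acts on the negative-definite complement $[\omega]^\perp$ and is therefore compact; its intersection with the discrete group $\Gamma_\omega$ is finite. By \cite{LW11} the group $\Gamma_\omega$ is generated by the homological actions of Lagrangian Dehn twists, and since $\rho$ is an isomorphism these lift to a generating set of $\pi_0(Symp(X,\omega))$ consisting of Lagrangian Dehn twists. This already gives that the SMC is finite and generated by Lagrangian Dehn twists.

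Then I would identify the generators with homology classes and carry out the genericity step. If $L\subset X$ is a Lagrangian sphere then $\int_L\omega=0$, and applying the adjunction formula to the genus-$0$ class $[L]$ forces $[L]^2=-2$ and $K_\omega\cdot[L]=0$; the homological action of the Dehn twist $\tau_L$ is the reflection $x\mapsto x+(x\cdot[L])[L]$. Conversely, by \cite{LW11} every class $e$ with $e^2=-2$, $K_\omega\cdot e=0$ and $[\omega]\cdot e=0$ is represented by a Lagrangian sphere, so the generating set may be taken to be indexed by such classes. The set $\mathcal{R}=\{e\in H_2(X;\ZZ):e^2=-2,\ K_\omega\cdot e=0\}$ is the set of norm-$(-2)$ vectors in the negative-definite lattice $K_\omega^\perp$ of rank $\le 4$, hence finite; this is the claimed finite set of $K_\omega$-null spherical classes of zero $\omega$-area. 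Finally, for these rational manifolds the canonical class $K_\omega$ is unique up to $\Diff(X)$, so (after applying a diffeomorphism, which does not change $\pi_0(Symp(X,\omega))$ up to isomorphism) $\mathcal{R}$ is a fixed finite set independent of $\omega$; each $e\in\mathcal{R}$ imposes the linear condition $[\omega]\cdot e=0$, cutting out a hyperplane in $H^2(X;\RR)$. A symplectic form whose class avoids the finite union $\bigcup_{e\in\mathcal{R}}e^{\perp}$ — a dense open condition within the symplectic cone, i.e.\ a generic one — carries no Lagrangian sphere, so $\Gamma_\omega$, and hence the SMC, is trivial.

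The non-formal content here is entirely imported: the triviality of the Torelli part (Theorem~\ref{main} and the known cases) and the realization results of \cite{LW11}. The points where I would be most careful are (i) that the generating set furnished by \cite{LW11} lifts through $\rho$ exactly as the Dehn twists themselves — automatic once $\rho$ is an isomorphism, but worth recording — and (ii) the two finiteness inputs in the last paragraph, the finiteness of $\mathcal{R}$ and the $\Diff$-uniqueness of $K_\omega$, which are what make the word ``generic'' meaningful and which should be cited or checked directly for $n\le 4$. Neither is a serious obstacle, so the substance of the argument remains Theorem~\ref{main}.
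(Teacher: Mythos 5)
Your proposal is correct and follows essentially the same route the paper intends for this corollary: combine Theorem~\ref{main} (together with the previously known cases $n\le 3$ and $S^2\times S^2$) with the result of \cite{LW11} that the homological action of $Symp(X,\omega)$ is generated by Lagrangian Dehn twists, identify the relevant classes via adjunction as $K_\omega$-null $(-2)$-sphere classes of zero $\omega$-area lying in the finite root set of the negative definite lattice $K_\omega^{\perp}$, and conclude triviality for $[\omega]$ off the finitely many hyperplanes $e^{\perp}$. Your added details (injectivity of $\rho$, compactness of the stabilizer of $[\omega]$, finiteness of the root set) are accurate fillings-in of what the paper leaves implicit, not a different argument.
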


It is shown in \cite{BLW12} that the following proposition holds:

\begin{prp}\label{c:TTS2} Suppose  $(X^{4}, \omega)$ is a symplectic rational manifold. Then $Symp_h(X, \omega)$ acts
transitively on the space of

\begin{itemize}
  \item homologous Lagrangian spheres
  \item homologous symplectic $-2$-spheres
 \item   $\ZZ_2$-homologous Lagrangian $\RR P^2$'s and homologous symplectic $-4$-spheres if $b^{-}_{2}(X) \leq 8$

 \end{itemize}
\end{prp}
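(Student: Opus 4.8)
The plan is to run the standard template for transitivity statements of this kind: exhibit, for each of the three types of submanifold, a \emph{connected} auxiliary parameter/moduli space over which the submanifolds vary, and then read off transitivity of the $Symp_h$-action on the fibres, using the isotopy extension theorem (Banyaga) to turn a path of submanifolds into an ambient symplectomorphism. The point to keep in mind throughout is that if every move in the construction is made within a \emph{fixed} homology (or $\ZZ_2$-homology) class, the resulting symplectomorphism automatically acts trivially on homology, i.e.\ lies in $Symp_h$. I would start from the best-understood case, embedded symplectic spheres in an exceptional class $E$ (so $E^2=-1$, $K_\omega\cdot E=-1$): by Taubes--Seiberg--Witten together with Gromov compactness, a generic $\omega$-compatible $J$ carries a unique embedded $J$-holomorphic sphere in class $E$, and since $\mJ_\omega$ is connected with the degenerate locus of codimension $\ge 2$, two symplectic spheres $E_0,E_1$ in class $E$ are joined by following this unique representative along a generic path from a $J_0$ taming $E_0$ to a $J_1$ taming $E_1$. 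This gives transitivity of $Symp_h$ on each fixed exceptional class; it will be the base case for everything else.

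Next I would bootstrap to symplectic $-2$-spheres. Given $S_0,S_1$ in a class $A$ with $A^2=-2$, $K_\omega\cdot A=0$, pick an exceptional class $E$ with $E\cdot A=1$; such a class exists once $b^-_2\ge 1$, and the hypothesis $b^-_2\le 8$ guarantees the lattice of exceptional classes (the $E_k$ root-system combinatorics) is rich and controllable. Using a $J_i$ taming $S_i$ and positivity of intersections, the class $E$ has an embedded $J_i$-holomorphic representative $E_i$ meeting $S_i$ transversally in exactly one point, which a local Moser argument makes $\omega$-orthogonal. By the base case there is $\phi\in Symp_h$ with $\phi(E_0)=E_1$, so after replacing $S_0$ by $\phi(S_0)$ both spheres meet the \emph{same} exceptional sphere $E_1$ once; blowing down $E_1$ yields a rational surface with $b^-_2$ one smaller in which $S_0,S_1$ become symplectic spheres in the class $A+[E]$ of square $-1$, i.e.\ an exceptional class, and the base case there (blown back up along a common ball) produces the desired element of $Symp_h$. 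The $-4$-sphere case is handled by an entirely analogous reduction, now passing through spheres of square $-1$ or $0$ (fibre classes of a rational ruled surface), whose transitivity is again classical; this is where $b^-_2\le 8$ is again used to keep the auxiliary exceptional classes available.

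The Lagrangian cases I would reduce to the symplectic-sphere cases by a local-model/symplectic-cutting correspondence. A Lagrangian sphere $L$ has a Weinstein neighborhood modeled on a disk bundle $D_\lambda T^*S^2$, whose boundary $\RR P^3$ carries the geodesic $S^1$-action; performing the symplectic cut of $(X,\omega)$ along this boundary replaces a neighborhood of $L$ by the $-2$-disk-bundle over $S^2$ and turns $L$ into a symplectic $-2$-sphere in the cut manifold $(\hat X,\hat\omega)$, with homologous Lagrangians going to homologous symplectic $-2$-spheres (and the construction is reversible). The same works for a Lagrangian $\RR P^2$: its Weinstein neighborhood $D_\lambda T^*\RR P^2$ has boundary $L(4,1)$, and cutting glues in the $-4$-disk-bundle over $S^2$, converting a $\ZZ_2$-homology class of Lagrangian $\RR P^2$'s into a homology class of symplectic $-4$-spheres. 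Transitivity downstairs then follows from transitivity upstairs (already proven), provided one checks that the Weinstein normalizations, the cut, and the reverse gluing can all be performed in families — a parametrized Moser argument together with the uniqueness of symplectic neighborhoods of Lagrangian $S^2$ and $\RR P^2$.

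I expect the main obstacle to be twofold. First, controlling pseudoholomorphic degenerations along the connecting paths of almost complex structures: a priori the classes $E$, $A+[E]$, or the fibre classes could shed exceptional or ghost bubbles, and excluding this relies on the complete description of the $J$-holomorphic curve cone of rational surfaces, together with careful bookkeeping at the non-generic endpoints $J_0,J_1$ where the target submanifolds actually live. Second, the compatibility of auxiliary choices — the exceptional spheres $E_i$, the blow-down balls, and (in the Lagrangian cases) the cutting hypersurfaces — must be arranged so that the several isotopies splice into a single element of $Symp_h$; this is essentially a bootstrapping matter, resolved by invoking the exceptional-class transitivity as a lemma at each stage, and it is also the reason the three bullets must be proved in the stated logical order rather than independently.
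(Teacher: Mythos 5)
First, a point of reference: the paper does not actually prove Proposition \ref{c:TTS2} --- it is quoted from \cite{BLW12} (and \cite{LW11}), whose title ``Spherical Lagrangians via ball packings and symplectic cutting'' already describes the method. Your architecture does match that of the cited works: establish transitivity for symplectic spheres first (exceptional classes via Taubes--Seiberg--Witten uniqueness and connectedness of $\mathcal J_\omega$, then negative spheres by blow-down reductions), and then reduce the Lagrangian cases by cutting along the boundary $\RR P^3$, resp. $L(4,1)$, of a Weinstein neighborhood, which converts a Lagrangian $S^2$ into a symplectic $-2$-sphere and a Lagrangian $\RR P^2$ into a symplectic $-4$-sphere in the cut manifold. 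So the skeleton is right.

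However, as a proof the proposal has genuine gaps at exactly the places where the cited papers do their real work. (i) The blow-down/blow-up splice is not a routine compatibility check: a $\phi\in Symp_h$ of the blown-down manifold carrying $S_0'$ to $S_1'$ need not respect the blow-up ball, so to lift it back you need transitivity (equivalently connectedness) for spaces of symplectic ball embeddings \emph{relative} to the sphere --- balls centered on, or disjoint from, $S_1'$ --- and this relative ball-packing statement is the technical heart, not something recovered by ``invoking exceptional-class transitivity at each stage.'' Also, your $-2$ bootstrap assumes an exceptional class $E$ with $E\cdot A=1$ exists and has an embedded representative for the non-generic $J_i$ making $S_i$ holomorphic; and it fails outright on the minimal rational surface $S^2\times S^2$, which has $-2$-spheres but no exceptional classes, so a separate fiber-class argument is needed there. (ii) The $-4$-sphere case is not ``entirely analogous'' or ``classical'': adjunction gives $K_\omega\cdot A=2$, so a single blow-down does not reach an exceptional class, several auxiliary exceptional spheres meeting $A$ must be produced and degenerations excluded, and this is precisely where $b^-_2(X)\leq 8$ enters through lattice/positivity arguments --- it is itself a main theorem of \cite{BLW12}, so treating it as known makes the argument circular. (iii) The un-cutting step needs more than a parametrized Moser argument: one must know the cut manifold is again a rational surface (so the sphere bullets apply there), normalize the symplectomorphism to be a bundle map near the $-2$- or $-4$-sphere before regluing the Weinstein piece, and then verify that the reassembled map of $X$ is homologically trivial; none of these is automatic from ``every move is made within a fixed homology class.''
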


Hence we  also have the following corollary:
\begin{cor}

For a rational manifold with Euler number up to $7$,
the space  of \\
$\bullet$ homologous Lagrangian spheres, \\
$\bullet$ $\ZZ_2$- homologous Lagrangian  $\RR P^2$,\\
$\bullet$ homologous $-2$  symplectic spheres,\\
$\bullet$ homologous $-4$  symplectic spheres,\\
is connected.
\end{cor}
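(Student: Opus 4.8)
The plan is to deduce this corollary formally from Theorem~\ref{main} and Proposition~\ref{c:TTS2}, using the elementary principle that a continuous transitive action of a path-connected topological group $G$ on a space $Y$ forces $Y$ to be path-connected, since $Y$ is then the image of $G$ under the continuous orbit map $g\mapsto g\cdot y_{0}$.

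First I would enumerate the manifolds involved. A closed rational $4$-manifold is either $S^{2}\times S^{2}$ or a blow-up $\CC P^{2}\#\,n\,\overline{\CC P^{2}}$, with Euler numbers $4$ and $3+n$ respectively; hence "Euler number at most $7$" singles out $X=S^{2}\times S^{2}$ together with $X=\CC P^{2}\#\,n\,\overline{\CC P^{2}}$ for $0\le n\le 4$. In every such case $b_{2}^{-}(X)\le 4<8$, so the numerical hypothesis in the third and fourth items of Proposition~\ref{c:TTS2} is met, and we conclude that $Symp_{h}(X,\omega)$ acts transitively on each of the four spaces in question — each equipped with its natural $C^{\infty}$-topology, with respect to which the $Symp_{h}(X,\omega)$-action and the orbit maps are continuous.

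Next I would record connectedness of the Torelli group. For $X=\CC P^{2}\#\,4\,\overline{\CC P^{2}}$ this is exactly Theorem~\ref{main}; for $S^{2}\times S^{2}$ and $\CC P^{2}\#\,k\,\overline{\CC P^{2}}$ with $k\le 3$ it is classical (cf.\ the references cited in the introduction). Since $Symp_{h}(X,\omega)$ is a Fr\'echet Lie group, connectedness implies path-connectedness. Combining this with the transitivity above, each of the four spaces is a single orbit of the path-connected group $Symp_{h}(X,\omega)$, hence the continuous image of a path-connected space, hence path-connected, and in particular connected. This completes the argument; note that connectedness of $Symp_{h}(X,\omega)$, not merely transitivity, is the essential ingredient.

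There is no serious obstacle here: the statement is a direct corollary of the two cited results. The only points requiring minor care are (i) equipping each of the four moduli spaces with a topology making the $Symp_{h}(X,\omega)$-action and the associated orbit maps continuous, which is routine for the $C^{\infty}$-topology on the relevant spaces of embeddings (or their quotients); and (ii) verifying the numerical hypothesis $b_{2}^{-}(X)\le 8$ of Proposition~\ref{c:TTS2} needed for the Lagrangian $\RR P^{2}$ and symplectic $-4$-sphere statements, which holds trivially here since $b_{2}^{-}(X)\le 4$.
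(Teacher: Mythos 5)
Your proposal is correct and follows essentially the same route the paper intends: the corollary is stated as an immediate consequence of Theorem \ref{main} (together with the previously known cases) and the transitivity statement of Proposition \ref{c:TTS2}, with connectedness of each moduli space coming from its being a single orbit of the connected group $Symp_h(X,\omega)$. Your added bookkeeping (enumerating the manifolds with Euler number at most $7$ and checking $b_2^-\le 4<8$) is exactly the implicit verification the paper leaves to the reader.
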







{\bf Acknowledgements:} Our indebtedness
 to Jonathan Evan's  illuminating paper \cite{Eva11} is throughout and evident. We would
like to thank Martin Pinsonnault for sharing his insights on an upcoming project towards all
rational homotopy groups
 of  $Symp(\CC P^2  \# 4{\overline {\CC P^2}}, \omega)$, and for his comments.   We are also grateful to Robert Gompf for useful discussions.
We thank an anonymous referee for the careful reading and many useful comments which greatly improved our exposition.
 T.-J. Li and W. Wu are supported
 by NSF Focused Research Grants DMS-0244663, W.Wu is supported by AMS-Simons travel funds.

\section{Analyzing the diagram}

 We analyze the   diagram \eqref{summary} and  derive a
criterion for the connectedness of $Symp_h(X, \omega)$ in Corollary \ref{criterion}.

\subsection{Groups associated to a configuration}\label{group}
Let $C$ be a  configuration in $X$.
We first introduce the groups appearing in \eqref{summary}:\\

{\bf Subgroups of $Symp_h(X, \omega)$}

Recall that $Symp_h(X, \omega)$ is the group of symplectomorphisms of $(X, \omega)$ which acts trivially on $H_*(X, \ZZ)$.

$\bullet$ $Stab(C) \subset Symp_h(X, \omega)$ is the subgroup of symplectomorphisms fixing  $C$ setwise, but not necessarily pointwise.

$\bullet$ $Stab^0(C)\subset Stab(C)$ is the subgroup the group  fixing  $C$ pointwise.

$\bullet$  $Stab^1(C) \subset Stab^0(C)$ is subgroup fixing  $C$ pointwise and acting trivially on the normal bundles of its
components.\\

{\bf $Symp_c(U)$ for the complement $U$}

$Symp_c(U)$ is the group of compactly supported symplectomorphisms  of $(U, \omega|_U)$,
where $U=X\setminus C$ and the form $\omega|_U$ is the inherited form on $U$ from $X$. It is topologised in this way:
let $(U,\omega)$ be a non-compact symplectic manifold and let $\mK$ be the set
of compact subsets of $U$. For each $K\in\mK$ let $Symp_K(W)$ denote the group
of symplectomorphisms of $U$ supported in $K$, with the topology of
$\mC^{\infty}$-convergence. The group $Symp_c(U,\omega)$ of
compactly-supported symplectomorphisms of $(U,\omega)$ is topologised as the
direct limit of $Symp_K(W)$ under inclusions.\\

{\bf $Symp(C)$  and $\mG(C)$ for the configuration $C$}

Given  a configuration of embedded
symplectic spheres $C=C_1\cup\cdots\cup C_n\subset X$ in a 4-manifold, let
$I$ denote the set of intersection points amongst the components. Suppose that
there is no triple intersection amongst components and that all intersections
are transverse.  Let   $k_i$ denote the cardinality of  $I\cap C_i$, which is the number of intersection of points on $C_i$.

    The group $Symp(C)$ of symplectomorphisms of $C$ fixing the
components of $C$ is  the product
$\prod_{i=1}^nSymp(C_i,I\cap C_i)$.
Here $Symp(C_i,I\cap C_i)$ denotes the group of symplectomorphisms of  $C_i$
fixing   the  intersection points $I\cap C_i$.  Since  each $C_i$ is a $2-$sphere and  $Symp(S^2)$ acts transitivity on $N-$tuples of distinct points in $S^2$, we can write $Symp(C_i, I\cap C_i)$ as
$Symp(S^2, k_i)$. Thus
\begin{equation}\label{sympc}
 Symp(C)\cong \prod_{i=1}^nSymp(S^2,k_i)
\end{equation}
As shown in \cite{Eva11} we have:
\begin{equation} \label{sympk}
Symp(S^2,1)\simeq S^1;\hspace{5mm}
Symp(S^2,2)\simeq  S^1;\hspace{5mm}
Symp(S^2,3)\simeq {\star};
\end{equation}\\
where $\simeq$ means homotopy equivalence.
And when $k=1,2$, the $S^1$ on the right  can be taken to be the  loop of  a Hamiltonian circle action fixing the $k$ points.

The symplectic gauge group  $\mG(C)$ is the product $\prod_{i=1}^n\mG_{k_i}(C_i)$. Here $\mG_{k_i}(C_i)$ denotes
the group of symplectic gauge transformations of the symplectic normal bundle
to $C_i\subset X$ which are equal to the identity at the $k_i$ intersection points.
Also shown in \cite{Eva11}:
\begin{equation} \label{gau}
\mG_0(S^2) \simeq S^1 ; \hspace{5mm}  \mG_1(S^2) \simeq \star; \hspace{5mm}  \mG_k(S^2) \simeq \ZZ^{k-1},\ k>1.
\end{equation}
Since we assume the configuration is connected, each $k_i\geq 1$. Thus by \eqref{gau}, we have
\begin{equation}\label{mG}
 \pi_0(\mG(C))= \oplus_{i=1}^n \pi_0(\mG_{k_i}(S^2))=\oplus_{i=1}^n  \mathbb Z^{k_i-1}
\end{equation}
It is useful to describe  a canonical  set of $k_i$ generators for  $\mG_{k_i}(C_i)$.
For each intersection point $y\in I\cap C_i$, the evaluation map
$$ev_y:  \mG_{k_i}(C_i)\to SL(2,  \RR)$$ is a homotopy fibration, and hence it induces a map
$\ZZ=\pi_1(SL(2, \RR))\to \pi_0(\mG_{k_i}(C_i))$.
Let $g_{C_i}(y)\in \pi_0(\mG_{k_i}(C_i))$ denote the image of $1\in \ZZ$.



\subsection{Reduction  to the connectedness of $Stab(C)$}

The aim of this subsection is to show

\begin{prp}\label{stab connected}
$Symp_h(X, \omega)$ is connected if there is a full, stable,  standard configuration $C$
with connected
$Stab(C)$.
\end{prp}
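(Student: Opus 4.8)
The plan is to run the diagram \eqref{summary} from left to right, peeling off one map at a time and showing that each fibration in the chain preserves connectedness. Recall the maps
\[
Symp_c(U) \to Stab^1(C) \to Stab^0(C) \to Stab(C) \to Symp_h(X),
\]
together with the vertical fibrations whose fibers (reading the bottom row) are $\mG(C)$, $Symp(C)$, and $\mC_0$. I would use the standard fact that for a fibration $F \to E \to B$ one has an exact sequence $\pi_1(B) \to \pi_0(F) \to \pi_0(E) \to \pi_0(B)$, so $E$ is connected once $F$ is connected and $B$ is connected. Since we are given $Stab(C)$ is connected, the last map $Stab(C) \to Symp_h(X)$ needs the fiber to be connected; here the fiber is $Symp_c(U)$-related, so actually the cleaner route is to \emph{build up} connectedness of $Stab(C)$ is already assumed, and instead the work is to show this forces $Symp_h(X)$ connected. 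So the genuine content is: $Stab(C) \hookrightarrow Symp_h(X)$ has image a union of components, and the map $Symp_h(X) \to \mC_0$ (restricting a symplectomorphism to how it moves the configuration) realizes $Symp_h(X)$ as (homotopy) fibered over $\mC_0$ with fiber $Stab(C)$, provided $\mC_0$ is connected. Thus the first key step is to establish that the space $\mC_0$ of standard configurations of the fixed homological type is connected.

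For the connectedness of $\mC_0$, I would argue in two stages, following Evans. First, the space of \emph{(not necessarily standard)} configurations of the given homological type that are simultaneously $J$-holomorphic is connected: this uses the contractibility of $\mathcal J_\omega$, positivity of intersections, and automatic genericity/transversality for spheres of square $\geq -1$ (the stability hypothesis $[C_i]\cdot[C_i]\geq -1$ is exactly what makes the relevant moduli spaces of $J$-holomorphic spheres nonempty, smooth and connected, so one can interpolate between two configurations along a path of almost complex structures). Second, one upgrades an arbitrary such configuration to a standard one by an $\omega$-orthogonalization procedure at the intersection points — a Gromov-type local normalization — and shows this can be done continuously, so the inclusion $\mC_0 \hookrightarrow \{\text{all configurations}\}$ is a (weak) homotopy equivalence, hence $\mC_0$ is connected. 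The fullness hypothesis $H^2(X,C;\RR)=0$ ensures $U = X\setminus C$ carries no "extra" cohomology that could obstruct this and, more importantly, it will be used to control $Symp_c(U)$ later, but for this Proposition its role is to guarantee the configuration is large enough that moving it around via $Symp_h(X)$ sweeps out all of $\mC_0$.

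Next I would verify that $Symp_h(X) \to \mC_0$ is surjective and is a fibration (or at least a Serre fibration up to the usual parametrized isotopy extension / Moser-type arguments for symplectic submanifolds). Surjectivity onto $\mC_0$ amounts to: any two standard configurations of the same homological type are related by an element of $Symp_h(X)$; this follows from the connectedness of $\mC_0$ just established together with a parametrized version of the symplectic neighborhood theorem and isotopy extension, which lets one integrate a path in $\mC_0$ to a path in $Symp(X)$ that acts trivially on homology (since the homological type is fixed along the path). The fiber over a basepoint $C\in\mC_0$ is precisely $Stab(C)$. Then the homotopy exact sequence gives $\pi_0(Stab(C)) \to \pi_0(Symp_h(X)) \to \pi_0(\mC_0)$: with $\pi_0(\mC_0)=\ast$ and $\pi_0(Stab(C))=\ast$ by hypothesis, we conclude $\pi_0(Symp_h(X))=\ast$, i.e. $Symp_h(X,\omega)$ is connected.

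The main obstacle is the connectedness of $\mC_0$ and the fibration property of $Symp_h(X)\to\mC_0$: both require genuine $J$-holomorphic curve input (existence and connectedness of the relevant Gromov moduli spaces under the stability assumption, positivity of intersections to keep configurations embedded and with the prescribed intersection pattern along a generic path, and a careful orthogonalization to pass from $J$-holomorphic to standard configurations) plus parametrized neighborhood/isotopy-extension theorems in the symplectic category. Everything downstream — the reduction of $\pi_0(Symp_h(X))$ to $\pi_0(Stab(C))$ — is then a formal consequence of the fibration exact sequence, exactly as in \cite{Eva11}. I would cite \cite{Eva11} for the parts of this package that are already in the literature and only spell out the modifications needed for the homological types occurring for $\CC P^2\#n\overline{\CC P^2}$ with $n\leq 4$.
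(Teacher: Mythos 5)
Your proposal follows essentially the same route as the paper: connectedness of $\mC_0$ via the existence/connectedness results for stable $J$-holomorphic configurations together with the Gompf-type argument that the inclusion of standard configurations into all configurations is a weak homotopy equivalence, then transitivity of $Symp_h(X,\omega)$ on $\mC_0$ making $Stab(C)\to Symp_h(X,\omega)\to \mC_0$ a homotopy fibration (Palais), and finally the exact sequence of components. The one point to sharpen is where fullness enters: $H^2(X,C;\RR)=0$ is precisely the hypothesis needed to apply Banyaga's symplectic isotopy extension theorem, promoting an isotopy of standard configurations to an ambient homologically trivial symplectomorphism, which is the mechanism behind the transitivity you invoke.
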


This is derived from the right end of   diagram \eqref{summary} for a full, stable, standard configuration $C$:
\begin{equation}\label{stab}
Stab(C) \rightarrow Symp_h(X, \omega)\rightarrow \mC_0
\end{equation}

Recall that $\mC_0$ is the space of
standard  configurations having the homology type of $C$. We will show \eqref{summary} is a homotopy fibration and
 $\mC_0$ is connected.

We first review certain general facts regarding these configurations which are well-known
to experts. By \cite{LW11}, we have the following fact.

\begin{lma} \label{symplectic connectedness} Let $(M,\omega)$ be a
symplectic 4-manifold and $C$ a stable  configuration $\cup_i  C_i$.  Then
 there is a path connected Baire subset $\mathcal T_D$ of $\mathcal J_\omega\times M_{d(C_i)}$ such that
 a pair $(J, \Omega)$ lies in $\mathcal T_D$
 if and only if  there is a unique  embedded $J-$holomorphic configuration having the same homological type as $C$
 with the $i-$th component containing
 $\Omega_i$.
 \end{lma}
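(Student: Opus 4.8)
The plan is to realize $\mathcal{T}_D$ as the total space of a fibration-like structure over the space of admissible almost complex structures, and to establish path-connectedness by combining a genericity (transversality) argument with the automatic-genericity properties of embedded spheres of non-negative self-intersection in dimension four. First I would set up the relevant moduli spaces: for each homology class $A_i = [C_i]$ appearing in the configuration and each $J \in \mathcal{J}_\omega$, consider the (parametrized) moduli space of simple $J$-holomorphic spheres in class $A_i$. Since $C$ is stable, each $A_i$ has $A_i \cdot A_i \geq -1$, and by standard results (Gromov, McDuff, Taubes for rational and ruled surfaces) these classes are represented by embedded $J$-holomorphic spheres, with the moduli space of unparametrized curves smooth of the expected dimension for generic $J$; moreover when $A_i \cdot A_i \geq 0$ one has automatic transversality (Hofer--Lizan--Sikorav), so the moduli space is cut out transversally for \emph{every} $J$. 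The key classes to handle with care are the $(-1)$-classes, where one invokes the fact that the $(-1)$-sphere is unique and persists; and one must ensure that distinct components meet transversally in $[C_i]\cdot[C_j]$ points and that no spurious intersections or bubbles appear, which is where positivity of intersections and the constraint $[C_i]\cdot[C_j] \in \{0,1\}$ from Definition \ref{stable conf} are used.

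Next I would define the map that takes a configuration to the marked points $\Omega = (\Omega_1,\dots,\Omega_n) \in M_{d(C_i)}$, where $\Omega_i$ is a point (or tuple of points) on the $i$-th component $C_i$, so that the evaluation-type map from the parametrized moduli space to $\mathcal{J}_\omega \times M_{d(C_i)}$ is defined. I would then show that over a path-connected Baire subset $\mathcal{T}_D$ this map is a bijection onto its image in the sense stated: a pair $(J,\Omega)$ lies in $\mathcal{T}_D$ precisely when there is a \emph{unique} embedded $J$-holomorphic configuration of the prescribed homological type with $\Omega_i \in C_i$. Uniqueness in each class follows from positivity of intersections together with the fact (for rational/ruled surfaces and the classes in question) that the moduli space of embedded curves in each $A_i$ through the required number of generic points is a single point; the marking data $\Omega_i$ is chosen precisely to pin down the curve in its class, using that $Symp(S^2)$ acts transitively on tuples of distinct points (cf. \eqref{sympc}), so $d(C_i)$ is the number of points needed to rigidify $C_i$.

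For path-connectedness of $\mathcal{T}_D$: I would argue that the "bad set" — pairs $(J,\Omega)$ where the configuration fails to exist, fails to be embedded, or fails to be unique — is a countable union of submanifolds/subvarieties of positive (real) codimension at least $2$ in $\mathcal{J}_\omega \times M_{d(C_i)}$, hence its complement is path-connected (and Baire, being a countable intersection of open dense sets). The codimension-$2$ claim is the crux: it rests on the standard stratification of the space of almost complex structures by the appearance of non-embedded or reducible curves, multiply covered components, or extra intersection points, each of which is a complex-codimension-$\geq 1$ phenomenon for the relevant rational-surface classes. I expect the main obstacle to be precisely this codimension bookkeeping in the presence of the $(-1)$-classes, where automatic transversality fails and one must instead use the structure theory of $(-1)$-spheres (uniqueness, and that the locus where the $(-1)$-curve degenerates is codimension $\geq 2$) together with a gluing/deformation argument to connect any two points of $\mathcal{T}_D$ by a path staying in the good locus. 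Once path-connectedness of $\mathcal{T}_D$ is in hand, the stated biconditional characterization is essentially a restatement of the defining property of $\mathcal{T}_D$, and the lemma follows.
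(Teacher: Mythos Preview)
The paper does not give its own proof of this lemma; it simply cites \cite{LW11} for the statement. Your sketch is a reasonable outline of the argument that actually underlies such a result, and in broad strokes it matches what one finds in \cite{LW11}: realize the good set as the complement of a codimension-$\geq 2$ bad locus in $\mathcal{J}_\omega \times M_{d(C_i)}$, use positivity of intersections and Gromov compactness to control degenerations, and conclude path-connectedness.

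One concrete correction is in order. You write that automatic transversality fails for the $(-1)$-classes and treat this as the delicate case. This is backwards. The Hofer--Lizan--Sikorav criterion gives automatic regularity for an embedded $J$-holomorphic sphere whenever $c_1(A) \geq 1$; by adjunction an embedded sphere of self-intersection $s$ has $c_1 = s+2$, so the stability hypothesis $s \geq -1$ is exactly the range in which automatic transversality applies. In particular a $(-1)$-sphere is automatically regular for every $J$ for which it exists as an embedded curve, and its moduli space is a single point. The genuine obstruction is not transversality but \emph{existence}: for certain $J$ the class $A_i$ may fail to have an embedded holomorphic representative because it degenerates into a nodal curve. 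The codimension-$2$ stratification you invoke is for the locus of $J$ admitting such degenerations --- governed by the appearance of embedded spheres of self-intersection $\leq -2$ in classes appearing in the Gromov limit --- rather than for any transversality-failure locus. Once you relocate the difficulty there, your outline is essentially the correct one.
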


\begin{lma}\label{c0conn}
Assume $C$ is a stable, standard configuration.  The space $\mC_0$ of  standard configurations  having the homology type of $C$ is path connected.
\end{lma}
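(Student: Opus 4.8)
\emph{Strategy.} The plan is to join any two members $C^{0},C^{1}\in\mC_{0}$ by a path inside $\mC_{0}$, with Lemma \ref{symplectic connectedness} as the essential input. The observation to exploit is that one marked point on each component rigidifies a configuration: for $(J,\Omega)$ in the path-connected set $\mathcal T_{D}\subset\mathcal J_{\omega}\times M_{d(C_{i})}$ there is a \emph{unique} embedded $J$-holomorphic configuration $C_{(J,\Omega)}$ of the prescribed homology type whose $i$-th component passes through $\Omega_{i}$, so $(J,\Omega)\mapsto C_{(J,\Omega)}$ is a well-defined continuous map on $\mathcal T_{D}$. I would connect $C^{0}$ to $C^{1}$ in three steps: move each $C^{\epsilon}$ to a holomorphic representative sitting over a point of $\mathcal T_{D}$; run a path inside $\mathcal T_{D}$; and standardize the resulting family of $J$-holomorphic configurations.

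\emph{Step 1: reaching $\mathcal T_{D}$.} Given $C=\bigcup_{i}C_{i}\in\mC_{0}$, I would fix $J_{C}\in\mathcal J_{\omega}$ for which $C$ is $J_{C}$-holomorphic (Definition \ref{stable conf}, item 2) and a marked point $\Omega_{i}\in C_{i}\setminus I$ on each component, producing $(J_{C},\Omega)\in\mathcal J_{\omega}\times M_{d(C_{i})}$. Since $\mathcal T_{D}$ is dense, pick $(J',\Omega')\in\mathcal T_{D}$ arbitrarily $C^{\infty}$-close to $(J_{C},\Omega)$ and set $C'=C_{(J',\Omega')}$. Using the regularity (automatic transversality) of the sphere classes $[C_{i}]$, available precisely because $[C_{i}]\cdot[C_{i}]\ge-1$, the configuration $C$ survives the perturbation of $J_{C}$ as a nearby $J'$-holomorphic configuration, and Gromov compactness together with the uniqueness clause of Lemma \ref{symplectic connectedness} forces $C'$ to be $C^{\infty}$-close to $C$. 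Its components then meet transversely, positively, and nearly $\omega$-orthogonally at their nodes, so a small isotopy supported near $I$ (a Moser deformation inside Darboux balls) standardizes $C'$ to $\widehat{C'}\in\mC_{0}$, still close to $C$; a final short symplectic isotopy joins $C$ to $\widehat{C'}$ within $\mC_{0}$.

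\emph{Step 2: interpolating.} Applying Step 1 to $C^{0}$ and $C^{1}$ gives points $(J_{0},\Omega_{0}),(J_{1},\Omega_{1})\in\mathcal T_{D}$ together with short paths in $\mC_{0}$ from $C^{\epsilon}$ to $\widehat{C'_{\epsilon}}$. I would then choose a path $t\mapsto(J_{t},\Omega_{t})$ in $\mathcal T_{D}$ between the two endpoints (possible since $\mathcal T_{D}$ is path-connected) and take $C_{t}:=C_{(J_{t},\Omega_{t})}$. Uniqueness plus Gromov compactness---again using $[C_{i}]\cdot[C_{i}]\ge-1$ to bar bubbling---make $t\mapsto C_{t}$ a continuous family of embedded $J_{t}$-holomorphic configurations, and carrying out the standardization of Step 1 fiberwise over this family yields a path in $\mC_{0}$ from $\widehat{C'_{0}}$ to $\widehat{C'_{1}}$. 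Concatenating the three pieces gives a path in $\mC_{0}$ from $C^{0}$ to $C^{1}$, proving path-connectedness.

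\emph{Main obstacle.} The crux is performing the standardization continuously in $t$: near each node a $J_{t}$-holomorphic configuration is a pair of symplectic surfaces meeting transversely and positively, and straightening them to be $\omega$-orthogonal requires a Moser-type deformation in a Darboux chart, so one must verify that all the auxiliary data (Darboux charts, cutoffs, interpolating forms) can be chosen to vary continuously with $t$, which comes down to the contractibility of the space of such choices. The other delicate input is the Gromov-compactness step: the stability hypothesis $[C_{i}]\cdot[C_{i}]\ge-1$ is exactly what guarantees that a $J$-holomorphic Gromov limit of configurations of the given homology type is again an embedded configuration of that type (no multiple covers, no bubbles disconnecting or altering a component), so that proximity of $(J,\Omega)$ really does force $C^{\infty}$-proximity of the associated configurations. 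Granting these, density and path-connectedness of $\mathcal T_{D}$ finish the proof; as a byproduct one sees that any two configurations of a fixed homology type---standard or not---are symplectically isotopic.
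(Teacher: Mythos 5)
Your proposal is correct and follows essentially the same route as the paper: connectedness is extracted from the path-connected Baire set $\mathcal T_D$ of Lemma \ref{symplectic connectedness}, and the passage from $J$-holomorphic configurations back to $\omega$-orthogonally intersecting (standard) ones is handled by a Gompf-type straightening isotopy. The ``main obstacle'' you flag --- carrying out this standardization continuously in the parameter --- is precisely the content of the Gompf isotopy argument the paper cites from Evans, which upgrades it to the statement that the inclusion $\mC_0\hookrightarrow\mC$ is a weak homotopy equivalence, so your sketch matches the paper's proof in both structure and level of detail.
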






\begin{proof}
Consider $\mC$, the space of configurations as in Definition \ref{stable conf}.
By  Lemma \ref{symplectic connectedness}  we see that the space $\mC$ is connected. Using a Gompf isotopy argument, it is shown in \cite{Eva11} that  the inclusion  $\iota:\mC_0\to \mC$ is a weak homotopy equivalence.
Therefore, $\mC_0$ is also connected.

\end{proof}

With $C$ being full, the following lemma holds:

\begin{lma} \label{transitive}  If the   stable,  standard configuration $C$ is also full, then
$Symp_h(X, \omega)$ acts transitively on $\mC_0$. In particular, \eqref{stab} is a
homotopy fibration.
\end{lma}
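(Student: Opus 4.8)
The plan is to prove the two assertions in order: first that $Symp_h(X,\omega)$ acts transitively on $\mC_0$, and then deduce that \eqref{stab} is a homotopy fibration. For transitivity, let $C'$ and $C''$ be two standard configurations of the same homology type. By Lemma \ref{c0conn} the space $\mC_0$ is path connected, so it suffices to produce, for each configuration $C \in \mC_0$, a neighborhood $\mathcal{N}$ of $C$ in $\mC_0$ such that every $C' \in \mathcal{N}$ is carried to $C$ by some element of $Symp_h(X,\omega)$ — transitivity on each path component then follows by a standard connectedness/chaining argument along a path in $\mC_0$ joining $C'$ to $C''$. To build such local symplectomorphisms, I would invoke the uniqueness part of Lemma \ref{symplectic connectedness}: a nearby standard configuration $C'$ is $J'$-holomorphic for a nearby $J' \in \mathcal{J}_\omega$, and after a small isotopy of almost complex structures (which can be realized by a symplectic isotopy of $X$, since $\mathcal{J}_\omega$ is connected and the relevant moduli spaces vary smoothly) one arranges $C'$ and $C$ to be holomorphic for the same $J$; their uniqueness in a fixed homology class with prescribed marked points then forces them to coincide after the isotopy. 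The isotopy is supported on all of $X$ but since it is an isotopy starting at the identity it lands in the identity component, hence acts trivially on homology — so the resulting symplectomorphism lies in $Symp_h(X,\omega)$.

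The fullness hypothesis $H^2(X,C;\RR)=0$ enters precisely to guarantee that such an isotopy can be taken to be symplectic: it ensures that the complement $U = X \setminus C$ carries no cohomological obstruction to propagating the deformation, equivalently that the restriction map $H^2(X;\RR) \to H^2(C;\RR)$ is injective, so that two cohomologous symplectic forms agreeing near $C$ (or the two configurations viewed via a Moser-type argument) are connected by an ambient isotopy fixing homology. In more detail, I would run a Moser argument: given the two $J$-holomorphic copies of the configuration, build a diffeomorphism of $X$ taking one to the other that is isotopic to the identity, pull back $\omega$, observe the pullback and $\omega$ agree on $C$ and are cohomologous on $X$ (using fullness to upgrade "cohomologous on $C$" appropriately), and apply Moser to obtain a symplectomorphism in $Symp_h(X,\omega)$ realizing the desired motion of the configuration.

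For the second assertion, once $Symp_h(X,\omega)$ acts transitively on $\mC_0$ with stabilizer $Stab(C)$ at the base configuration $C$, the sequence \eqref{stab} is the orbit map of a transitive action of a topological group. To conclude it is a homotopy fibration, I would check that the action admits local sections: near $C \in \mC_0$ the construction above actually produces a continuous family of symplectomorphisms $\sigma_{C'} \in Symp_h(X,\omega)$ with $\sigma_{C'}(C) = C'$ depending continuously on $C'$, which is exactly a local section of the orbit map $Symp_h(X,\omega) \to \mC_0$. A topological group acting transitively on a space with local sections yields a principal-type fibration, hence a homotopy fibration $Stab(C) \to Symp_h(X,\omega) \to \mC_0$, as claimed.

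The main obstacle will be making the local-section construction genuinely continuous and simultaneously symplectic and homology-trivial: one must patch the Moser/Gompf isotopies over a family of nearby configurations in a way that depends continuously on the configuration and does not accidentally introduce nontrivial homological monodromy. Controlling the support of the isotopies near the (fixed) configuration $C$ while keeping them global enough to realize arbitrary nearby $C'$ — and verifying that fullness is exactly the cohomological condition that lets this succeed — is the technical heart of the argument; the rest is formal homotopy theory of group actions.
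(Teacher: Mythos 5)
Your skeleton (path-connectedness of $\mC_0$ plus an isotopy-extension mechanism for transitivity, then local sections for the fibration) is in the same spirit as the paper, but the key step you supply for transitivity fails. You propose to make two nearby configurations holomorphic for the same almost complex structure via ``a small isotopy of almost complex structures which can be realized by a symplectic isotopy of $X$, since $\mathcal{J}_\omega$ is connected.'' That is not true: $Symp(X,\omega)$ does not act transitively (nor with open orbits) on $\mathcal{J}_\omega$, so a path in $\mathcal{J}_\omega$ is in general not induced by any ambient symplectic isotopy; connectedness of $\mathcal{J}_\omega$ gives no such realization. Since this is the mechanism by which you carry $C'$ onto $C$, the local-transitivity argument collapses here. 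Your fallback Moser sketch is closer to the truth but omits exactly the points the actual proof rests on: you never construct the ambient diffeomorphism carrying one configuration to the other (this is where the isotopy through \emph{standard} configurations from Lemma \ref{c0conn} is used), and you never use the $\omega$-orthogonality at the intersection points, which provides the standard local model needed to extend an isotopy of configurations to a symplectic isotopy of a neighborhood of $C$. Moreover, ``the pullback and $\omega$ agree on $C$ and are cohomologous'' is not sufficient for a relative Moser argument that fixes $C$; one needs agreement on a neighborhood (or a primitive chosen to vanish appropriately along $C$), and packaging all of this is precisely the content of Banyaga's symplectic isotopy extension theorem (\cite{MS05}, Theorem 3.19), whose hypothesis is the fullness condition $H^2(X,C;\RR)=0$. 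The paper's proof is exactly: take the path of standard configurations, use orthogonality to extend it near $C$, and apply Banyaga's theorem to obtain a global symplectomorphism isotopic to the identity, hence lying in $Symp_h(X,\omega)$. (Also, fullness implies injectivity of $H^2(X;\RR)\to H^2(C;\RR)$ but is not ``equivalent'' to it, as you assert.)

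For the second assertion your idea is reasonable: the paper likewise deduces the homotopy fibration from a parametrized version of the extension argument, citing Theorem A of \cite{Pai60}, which is in effect the local-section statement you describe. But as you concede, you have not shown the family of symplectomorphisms can be chosen continuously in $C'$, and without a working transitivity mechanism there is nothing to parametrize; once the Banyaga-based argument above is in place, the parametrized version (or the citation to Palais) supplies this half.
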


\begin{proof}
 From Lemma \ref{c0conn} any $C_1,C_2\in\mC_0$ are isotopic through standard
configurations. The property that the configurations are {\bf symplectically
orthogonal} where they intersect, together with  the {\bf vanishing} of  $H^2(X,C;\RR)$, allows us to
extend such an isotopy to a global homologically trivial symplectomorphism of $X$ (by Banyaga's
symplectic isotopy extension theorem, see \cite{MS05}, Theorem 3.19).
So we have shown that the action of $Symp_h(X, \omega)$  on the connected space $\mC_0$ is transitive by  establishing  the $1-$dimensional
homotopy lifting property of the map $Symp_h(X, \omega)\to \mC_0$.
By a finite dimensional version of this argument
(or   Theorem A in \cite{Pai60}), we conclude that
\eqref{stab} is a
homotopy fibration.
\end{proof}

{\bf Proof of Proposition \ref{stab connected}}

Since \eqref{stab} is a homotopy fibration by Lemma \ref{transitive}, we have the associated   homotopy long exact sequence.
Because of  the connectedness of $\mC_0$ as shown in Lemma  \ref{c0conn}, the connectedness of
$Stab(C)$ implies the connectedness of $Symp_h(X, \omega)$.
Therefore, we have \ref{stab connected} as the reduction of our problem.

\subsection{Reduction to the surjectivity of $\psi$: $\pi_1(Symp(C))\rightarrow\pi_0(Stab^0(C))$}

To investigate the connectedness of  $Stab(C)$, considering the action of  $Stab(C)$ on $C$ and  the following portion of diagram \ref{summary}
 which appeared in \cite{Eva11} and \cite{AP12}:

\begin{equation}\label{stab0}
 Stab^{0}(C) \rightarrow Stab(C)\rightarrow Symp(C)
\end{equation}
 The following lemma  already appeared in \cite{Eva11} and was explained to the authors by
 J. D. Evans\footnote{Private communications.}. We here include more details for readers' convenience.

\begin{lma}\label{tree}

This diagram  \eqref{stab0}  is a homotopy fibration when
$C$ is a simply-connected standard configuration.

\end{lma}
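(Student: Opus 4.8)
The plan is to exhibit the restriction map $r: Stab(C) \to Symp(C)$ as a locally trivial fibration onto its image (or at least to verify the path-lifting property in all dimensions), so that $Stab^0(C) = r^{-1}(\id)$ is the honest fibre and the long exact sequence applies. First I would observe that $Symp(C) = \prod_i Symp(S^2, k_i)$ acts on the space of configurations near $C$, and the point is that any symplectomorphism of $C$ (fixing the components and intersection points) can be extended to a symplectomorphism of a neighborhood $\nu(C)$ of $C$ in $X$, and then damped out to a global symplectomorphism of $X$ lying in $Stab(C)$. Concretely, given $\phi \in Symp(C)$ close to the identity, one uses the symplectic neighborhood theorem: a neighborhood of each $C_i$ is determined up to symplectomorphism by the symplectic normal bundle, and a neighborhood of the whole configuration $C$ is determined by this data together with the local models near the (transverse, hence by standardness $\omega$-orthogonal) intersection points. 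Thus $\phi$ extends over $\nu(C)$, and since $H^1$-type obstructions vanish on the sphere components one can choose the extension continuously in $\phi$; cutting off by a Moser-type argument supported in $\nu(C)$ produces a global extension. This gives a local section of $r$ near $\id$, and translating by the group action gives local sections everywhere, i.e.\ $r$ is a fibration onto its image.

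The key steps, in order, are: (1) identify the target: show $r$ lands in $Symp(C)$ and that the relevant subspace of $Symp(C)$ it surjects onto is the full group (using that $Stab(C)$ acts trivially on homology, so the permutation/orientation data on components is fixed, consistent with the product structure \eqref{sympc}); (2) construct, for $\phi$ near $\id$ in $Symp(C)$, a symplectomorphism of a fixed neighborhood $\nu(C)$ restricting to $\phi$ on $C$ — here the simply-connectedness of $C$ enters, guaranteeing that the extension problem over the 1-skeleton (the intersection graph, a tree) and over each 2-cell $C_i \cong S^2$ has no global obstruction and depends continuously on $\phi$; (3) use Banyaga's isotopy extension / a compactly supported Moser argument to push the neighborhood-extension to an element of $Stab(C) \subset Symp_h(X,\omega)$, continuously in $\phi$; (4) conclude local triviality, identify $Stab^0(C)$ as the fibre over $\id$, and invoke the standard fact that a map admitting local sections (equivalently, with the homotopy lifting property) is a homotopy fibration.

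The main obstacle I anticipate is step (2)–(3): making the neighborhood extension and its cutoff \emph{continuous in the parameter} $\phi$, rather than merely existing for each fixed $\phi$. The symplectic neighborhood theorem and Moser's trick are classically stated for a single symplectomorphism; upgrading to a continuous (in $C^\infty$) family requires care in choosing the auxiliary data (the tubular neighborhood identification, the cutoff function, the primitive in Moser's equation) in a fibered way. This is exactly where simply-connectedness of $C$ is doing work — it kills the cohomological obstructions that would otherwise obstruct a global continuous choice — and where I expect the bulk of the technical detail to lie; since Evans \cite{Eva11} carries this out, I would follow that argument, supplying the extra details on the parametrized extension and the behavior at the $\omega$-orthogonal intersection points. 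The remaining steps are formal: once local sections exist, the homotopy fibration conclusion is immediate, and the identification of the fibre with $Stab^0(C)$ is definitional.
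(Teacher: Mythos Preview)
Your proposal is correct in outline and follows the same overall strategy as the paper: show the restriction $Stab(C)\to Symp(C)$ is surjective (indeed, admits local sections), identify $Stab^0(C)$ as the fibre, and invoke Palais to get the fibration. The difference lies in how the extension is actually built and in how simply-connectedness is used.

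The paper's argument is more concrete and avoids the abstract symplectic-neighborhood/Moser packaging you describe. Since each $C_i\cong S^2$, every $\phi_i\in Symp(C_i,I\cap C_i)$ is Hamiltonian, generated by some $f_i$. Using the $\omega$-orthogonality of intersections, one identifies a tubular neighborhood of $C_i$ with a disk bundle so that the neighboring components sit in fibres; then one pulls $f_i$ back by the bundle projection and cuts it off radially. This already yields a compactly supported Hamiltonian on $X$, so no Banyaga/Moser step is needed to globalize. The simply-connectedness enters combinatorially rather than cohomologically: since the intersection graph is a tree, one assigns levels (distance from a root), and at level $k$ each component $C_i$ has a \emph{unique} level-$(k-1)$ neighbor $C_l$; one normalizes $f_i(x_{il})=0$ so that the cut-off extension acts trivially on $C_l$, and absorbs the spill-over onto level-$(k+1)$ neighbors into $\phi_j'=\phi_j\circ\epsilon_j^{-1}$ before the next step. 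If the graph had a cycle, this normalization would impose two constraints on a single $f_i$ and the induction would break --- that is the precise role of simply-connectedness, not a vanishing $H^1$-obstruction per se.

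Your step (3) is where your version is loosest: Banyaga's isotopy extension is for ambient extensions of isotopies of \emph{submanifolds}, whereas here $C$ is fixed setwise and only reparametrized, so it does not apply directly; and cutting off a symplectomorphism (as opposed to a Hamiltonian) is delicate. The paper sidesteps both issues by working with Hamiltonians from the start. Your concern about continuity in the parameter $\phi$ is well-placed; the paper's explicit construction is manifestly continuous, after which the fibration statement follows from Palais as you say.
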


\begin{proof}
 First we show   $Stab(C)\rightarrow Symp(C)$ is surjective.

Recall that at each intersection point between two different components $\{x_{ij} \}= C_i\cap C_j$, the two components are
symplectically orthogonal to each other in a Darboux chart containing $x_{ij}$.
For convenience of exposition define the \textit{level} of components as follows: let $C_1$ be the unique
component of level $1$, and the level-$k$ components are defined as those
intersects components in level $k-1$ but does not belong to any lower levels.
This is well-defined again because of the simply-connectedness assumption.

An element in $Symp(C)$ is the composition of  Hamiltonian
diffeomorphism $\phi_i$ on each component $C_i$, because of the
simply connectedness of sphere. We start with endowing $C_1$ with a
Hamiltonian function $f_1$ generating $\phi_1$.  Let $C_i^2$ be
curves on level $2$.  Because $C^2_i$ intersects $C_1$
$\omega$-orthogonally, we can find a symplectic neighborhood $U_1$
of $C_1$, identified as a neighborhood of zero section of the normal
bundle, so that $U_1\cap C_i$ consists of finitely many fibers.
Pull-back $f_1$ by the projection $\pi$ of the normal bundle and
multiply a cut-off function $ \rho(r), \rho(r)=1, r\leq \epsilon \ll
1; \rho(r)=0, r\geq 2\epsilon$. Here $r$ is the radius in the fiber
direction. Denote by $\bar {\phi}_1$ the symplectomorphism generated
by this cut-off.
  Notice that $\bar {\phi}_1$ creates an extra Hamiltonian diffeomorphism $\epsilon_j$ on each component $C_j$ of level $2$,
 and we denote $\phi_j'=\phi_j\circ \epsilon_j^{-1}$ for $C_j$ belonging to level $2$.

One proceeds by induction on the level $k$.  Notice one could always
choose a Hamiltonian function $f_i$ on a component $C_i$ on level
$k$ which generates $\phi_i'$ with the property that
$f_i(x_{il})=0$. Here $C_l$ is the component of level $k-1$
intersecting $C_i$.  We emphasize this can be done because the
component $C_l$ on level $k-1$ which intersects $C_i$ is unique (and
that the intersection is a single point) due to the simply
connectedness assumption, and we do not restrict the value on any
other intersections of $C_i$ and components of level $k+1$.
Therefore we only fix the value of $f_i$ at a single point.

 One then again use the pull-back on the symplectic neighborhood and
 cut-off along the fiber direction to get a Hamiltonian function $H_i$
 which generates a diffeomorphism $\bar {\phi_i}$ supported on
the neighborhood of $C_i$. We note that $d(\pi^*f_1\cdot
\rho(r))|_{F_x}=0$ whenever $f_1(x)=0$, where $F_x$ is the normal
fiber over the point $x \in C_1$.   Hence $dH_i |_{C_l}=0$ since
$f_i(x_{il})=0$ as prescribed earlier, which means action of $\bar
{\phi_i}$ on $C_l$ is trivial. Taking the composition $\phi$ of all
these $\bar {\phi_i}'s$, $\phi$ is supported on a neighborhood of
$C$ and equals $\phi_i$ when restricted to $C_i$.

The transitivity of the action of $Stab(C)$ on $Symp(C)$ follows easily.
For any two maps $\phi_1, \phi_2 \in Symp(C),$ $ \phi_2\phi_1^{-1}  \in Symp(C)$. We can extend $\phi_2\phi_1^{-1}$ to $Stab(C)$.
Then this extended $\phi_2\phi_1^{-1}$ maps $\phi_1$ to $\phi_2$.

Now symplectic isotopy theorem (or Theorem A in \cite{Pai60}) for
the surjective map $Stab(C)\rightarrow Symp(C)$ proves the diagram
\eqref{stab0} is a fibration.

\end{proof}

Now we can establish the connectedness of $Stab(C)$ under the following assumptions:

\begin{prp}\label{surj}
 Let $(X,\omega)$ be a symplectic 4-manifold, and $C$  a simply-connected, full,  stable,  standard configuration.
 If  each component of $C$ has no more than 3 intersection points,
 then the surjectivity of the connecting map $\psi$: $\pi_1(Symp(C))\rightarrow\pi_0(Stab^0(C))$
 implies the connectedness of $Stab(C)$.
\end{prp}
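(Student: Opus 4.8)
The plan is to read the conclusion directly off the homotopy long exact sequence of the fibration \eqref{stab0}, so the argument is essentially a diagram chase once the correct inputs are assembled. Since $C$ is a simply-connected standard configuration, Lemma \ref{tree} tells us that
$$Stab^0(C) \to Stab(C) \to Symp(C)$$
is a homotopy fibration. First I would write out the relevant segment of its long exact sequence of homotopy groups,
$$\pi_1(Symp(C)) \xrightarrow{\ \psi\ } \pi_0(Stab^0(C)) \xrightarrow{\ \alpha\ } \pi_0(Stab(C)) \xrightarrow{\ \beta\ } \pi_0(Symp(C)),$$
observing that the connecting homomorphism of this fibration is exactly the map $\psi$ named in the statement, so no identification beyond its definition is needed.

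Next I would exploit exactness at the two middle terms. By hypothesis $\psi$ is surjective, so $\operatorname{im}(\psi) = \pi_0(Stab^0(C))$; exactness at $\pi_0(Stab^0(C))$ then gives $\ker(\alpha) = \pi_0(Stab^0(C))$, i.e. $\alpha$ is the trivial map. Exactness at $\pi_0(Stab(C))$ now yields $\ker(\beta) = \operatorname{im}(\alpha) = 0$, so that $\beta$ is injective. This reduces the connectedness of $Stab(C)$ to the vanishing of $\pi_0(Symp(C))$.

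Finally I would compute $\pi_0(Symp(C))$ from the product description \eqref{sympc}, namely $Symp(C) \cong \prod_{i=1}^n Symp(S^2,k_i)$. Because the configuration is connected each $k_i \geq 1$, and the hypothesis that every component carries at most three intersection points forces $k_i \in \{1,2,3\}$. By the homotopy equivalences \eqref{sympk}, every factor $Symp(S^2,k_i)$ is then homotopy equivalent to $S^1$ (when $k_i = 1,2$) or to a point (when $k_i = 3$), hence connected in every case. Therefore $\pi_0(Symp(C)) = \prod_i \pi_0(Symp(S^2,k_i)) = 0$, and injectivity of $\beta$ forces $\pi_0(Stab(C)) = 0$, i.e. $Stab(C)$ is connected.

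The argument carries no essential difficulty; its entire force sits in the hypotheses. The decisive input is the vanishing of $\pi_0(Symp(C))$, and this is precisely where the bound $k_i \leq 3$ is used: once some component meets the rest in four or more points, the corresponding factor $Symp(S^2,k_i)$ is no longer connected, so $\beta$ could acquire nontrivial image and the reduction would fail. The only bookkeeping point worth a line is the identification of the boundary homomorphism of \eqref{stab0} with the map $\psi$ of the statement, which holds by the very definition of $\psi$ as that connecting homomorphism.
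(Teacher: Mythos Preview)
Your proposal is correct and follows exactly the same approach as the paper's own proof: invoke Lemma~\ref{tree} for the fibration, use the hypothesis $k_i\leq 3$ together with \eqref{sympc} and \eqref{sympk} to obtain $\pi_0(Symp(C))=1$, and read off the conclusion from the long exact sequence. Your write-up is more explicit about the exactness chase, but the content is identical.
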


\begin{proof}
Since we assume that  each component of $C$ has no more than 3 intersection points, it follows from  \eqref{sympk}  and \eqref{sympc} that
$\pi_0(Symp(C))=1$.

By Lemma \ref{tree} we have   the homotopy long exact sequence associated to \eqref{stab0},
\[\cdots\rightarrow\pi_1(Symp(C))\overset{\psi}\rightarrow\pi_0(Stab^0(C))\rightarrow\pi_0(Stab(C))\rightarrow\pi_0(Symp(C))\]
Then the surjectivity of $\psi$ implies that
$Stab(C)$ is connected.

\end{proof}

\subsection{Three types of configurations}

Next we investigate when  the  map $\psi$: $\pi_1(Symp(C))\rightarrow\pi_0(Stab^0(C))$ is surjective. For this purpose we observe
that an element of $Stab^{0}(C)$ induces an automorphisms on the normal bundle of $C$. Thus  we further have the following
homotopy fibration  appeared in \cite{Eva11} and \cite{AP12}:

\begin{equation}\label{stab1}
Stab^{1}(C)\rightarrow Stab^{0}(C) \rightarrow \mG(C)
\end{equation}
In particular, there is the associated map   $\iota: \pi_0(Stab^0(C))\rightarrow \pi_0(\mG)(C)$.
Consider
 the composition map \[\bar{\psi}=\iota\circ \psi:\pi_1(Symp(C))\rightarrow\pi_0(Stab^0(C))\rightarrow \pi_0(\mG(C)).\]
Notice that $\pi_0(\mG(C))$ inherits a  group structure from     $\mG(C)$ and $\bar \psi$ is a group homomorphism.
As shown in \cite{Eva11}, $\bar \psi$ can be computed explicitly.

When $k_i\geq 3$,  $\pi_1(Symp(S^2, k))$ is trivial  by \eqref{sympk}.
When $k_i=1,2$,  $Symp(C_i,  I\cap C_i)$  is homotopic to
 the loop
of  a Hamiltonian circle action on $C_i$  fixing  the $k_i$ points. Denote such a loop
 by $(\phi_i)_{t}$. Observe that $(\phi_i)_t$ is
a generator of  $\pi_1(Symp(C_i, I \cap C_i))=\mathbb Z$.
Recall that for each component $C_j$ there is a canonical set of generators $\{g_{C_j}(y), y\in I\cap C_j\}$ for $\mG_{k_j}(C_j)$, introduced at the end of 2.1.
The following is Lemma 4.1 in \cite{Eva11}

\begin{lma}\label{localcomp} Suppose $C_i$ is a component with $k_i=1, 2$.
The image of $(\phi_i)_{2\pi}\in Symp(C_i, I\cap C_i)$ under $\bar{\psi}$ is described as follows.
\begin{itemize}
\item   if $k_i=1$ and  $C_j$ is the only component intersecting $C_i$ with $\{x\}=C_i\cap C_j$,  then  $(\phi_i)_{2\pi}$ is sent to     $$g_{C_j}(x)$$  in
the factor subgroup  $ \pi_0(\mG_{k_j}(C_j))$ of  $\pi_0(\mG(C))$.
\item  if $k_i=2$ and
$x\in C_i\cap C_j$, $y\in C_i\cap C_l$, then   $(\phi_i)_{2\pi}$ is sent to
$$(g_{C_j}(x),g_{C_k}(y))$$ in the factor subgroup $\pi_0(\mG_{k_j}(C_j))\times\pi_0(\mG_{k_l}(C_l))$ of $\pi_0(\mG(C))$.
 \end{itemize}
\end{lma}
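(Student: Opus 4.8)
The plan is to compute the image of the circle-action loop $(\phi_i)_{2\pi}$ under $\bar\psi$ by working locally near a single component $C_i$ with $k_i=1$ or $2$, tracking how a representative symplectomorphism extending the loop acts on the normal bundles of the neighbouring components. First I would fix, for $C_i$ with $k_i=1,2$, the standard Hamiltonian $S^1$-action on $(C_i,\omega)=(S^2,\omega)$ fixing the $k_i$ marked points (for $k_i=1$, rotation fixing one pole together with the point at infinity is not available, so one uses the rotation fixing the single intersection point and a second auxiliary point; for $k_i=2$, rotation fixing the two poles). Call its loop $(\phi_i)_t$. To realise this loop inside $Stab^0(C)$ I would use exactly the cut-off/extension construction from the proof of Lemma \ref{tree}: choose an $\omega$-symplectic tubular neighbourhood $U_i$ of $C_i$ identified with a disc bundle inside the normal bundle, in which each neighbouring component $C_j$ meeting $C_i$ at $x$ appears as a union of normal fibres through $x$; take the Hamiltonian $f_i$ generating $(\phi_i)_t$ on $C_i$, pull it back by the bundle projection $\pi$, multiply by a radial cut-off $\rho(r)$, and let $\Phi_t$ be the resulting Hamiltonian isotopy of $X$, supported in $U_i$. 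Since $f_i$ vanishes at the marked points, $\Phi_t$ fixes $C_i$ pointwise and fixes the intersection points on $C_i$, and it is supported away from all other components, so $\Phi_1 = (\phi_i)_{2\pi}$ is an honest element of $Stab^0(C)$ lifting the loop.

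The heart of the computation is then the induced action of $\Phi_t$ on the symplectic normal bundle of a neighbouring component $C_j$, restricted to its marked point $x = C_i\cap C_j$. Near $x$ I would pick a Darboux chart $(z,w)=(z_1+iz_2,\,w_1+iw_2)$ with $C_i = \{w=0\}$, $C_j=\{z=0\}$, so that $w$ is a coordinate on the normal fibre to $C_i$ and $z$ on the normal fibre to $C_j$. In these coordinates the pulled-back, cut-off Hamiltonian is $H = \rho(|w|)\, f_i(z)$, and near $x$ we have $f_i(z) = \pi|z|^2 + O(|z|^4)$ (the generator of the $S^1$-action has this normal form at a fixed point), so $H \approx \rho(|w|)\,\pi|z|^2$. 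Its Hamiltonian flow, linearised along $C_j=\{z=0\}$ in the $w$-direction where $\rho(|w|)\equiv 1$, rotates the $w$-fibre by angle proportional to $\partial(\pi|z|^2)/\partial(\cdots)$ — more precisely one computes that over the time interval $[0,2\pi]$ the derivative of $\Phi_t$ in the normal $w$-direction winds once around the circle in $SL(2,\RR)$, i.e. it represents the generator $1 \in \pi_1(SL(2,\RR)) = \ZZ$. By the very definition of $g_{C_j}(x)$ at the end of Section 2.1 (the image of $1\in\pi_1(SL(2,\RR))$ under the connecting map for the evaluation fibration $ev_x: \mG_{k_j}(C_j)\to SL(2,\RR)$), this says precisely that the component of $\bar\psi((\phi_i)_{2\pi})$ in the factor $\pi_0(\mG_{k_j}(C_j))$ is $g_{C_j}(x)$. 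For $k_i=1$ there is only one such neighbour $C_j$, and one checks the construction is supported away from every other component, giving the first bullet; for $k_i=2$ the same local analysis applies independently at each of the two marked points $x\in C_i\cap C_j$ and $y\in C_i\cap C_l$ (the chosen $S^1$-action rotating the sphere fixes both poles, and near each pole it has the same $\pi|z|^2$ normal form up to orientation, which is absorbed into the choice of generator), yielding the pair $(g_{C_j}(x), g_{C_l}(y))$.

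Finally I would note that all other factors of $\pi_0(\mG(C)) = \oplus_m \pi_0(\mG_{k_m}(C_m))$ receive $0$: the isotopy $\Phi_t$ is supported in $U_i$, which meets a component $C_m$ (with $m\ne i$ and $C_m\cap C_i=\emptyset$) not at all, and meets a neighbouring $C_j$ only in normal fibres over its marked point on $C_i$, so away from that single point $\Phi_t$ acts trivially on $C_j$ and hence trivially on its normal bundle there. This pins down the image in the full group $\pi_0(\mG(C))$, completing the proof. The main obstacle I anticipate is the normal-bundle winding-number computation: one must be careful that the cut-off factor $\rho(|w|)$, while it changes the Hamiltonian globally, does not alter the \emph{linearised} normal holonomy along $C_j$ (it doesn't, because on a neighbourhood of $C_j$ we have $|w|$ small so $\rho\equiv 1$), and that the relevant winding is $+1$ rather than $0$ or $-1$ — this is where the convention that $f_i$ \emph{generates} the loop $(\phi_i)_t$ (so its Hamiltonian vector field has period $2\pi$ and positive normal form $\pi|z|^2$ at the fixed point) is used, and it matches the normalisation of $g_{C_j}$ as the image of $+1\in\pi_1(SL(2,\RR))$. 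Everything else is the bookkeeping of which neighbour gets which generator, which is dictated by the combinatorics of the configuration.
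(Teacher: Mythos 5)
The paper offers no argument for Lemma \ref{localcomp} at all --- it is quoted as Lemma 4.1 of \cite{Eva11} --- so the comparison is with the standard proof, and your strategy (extend the rotation loop by the flow of a radially cut-off pull-back of its Hamiltonian, then read off the winding of the linearised return map on the normal bundle of the adjacent component, matching the definition of $g_{C_j}(x)$ via $\pi_1(SL(2,\RR))$) is indeed that proof's skeleton. Your $k_i=1$ case is essentially complete: one can normalise $f_i(x)=0$, the cut-off flow then fixes the adjacent sphere $C_j$ pointwise, and the induced gauge transformation of its normal bundle is rotation by $2\pi\rho(r)$ at radius $r$, i.e.\ $\pm g_{C_j}(x)$. (Minor slips: $\Phi_t$ does not fix $C_i$ pointwise --- its restriction to $C_i$ is $(\phi_i)_t$ --- and its support does meet the adjacent components; what is true, and what you need, is that it fixes them pointwise.)

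The genuine gap is in the $k_i=2$ case. Your construction hinges on ``$f_i$ vanishes at the marked points'', but the Hamiltonian of a nontrivial circle action on $S^2$ attains its maximum and minimum at its two fixed points, so it cannot vanish at both; at best $f_i(x)=0$ and $f_i(y)=b\neq 0$. Then along the component $C_l$ through $y$ the cut-off Hamiltonian $\rho(|w|)f_i(z)$ has nonvanishing differential $b\,\rho'(|w|)\,d|w|$ in the annulus where $\rho$ varies, so $\Phi_t$ rotates $C_l$ nontrivially there: $\Phi_t$ is not a lift of the loop $((\phi_i)_t,\mathrm{id},\dots)$ and its time-$2\pi$ map does not lie in $Stab^0(C)$, so $\bar\psi$ cannot be read off from it directly. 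One must correct the lift, exactly as in the proof of Lemma \ref{tree} (compose with a cut-off Hamiltonian isotopy supported near the offending annulus in $C_l$, away from $C_i$ and from the other marked points of $C_l$), and then verify that this correction does not alter the linearised normal winding at $x$ or $y$; note also that at $y$ the local model is $b+\lambda\pi|z|^2$ rather than $\lambda\pi|z|^2$, so the winding gauge transformation on $N_{C_l}$ appears across the cut-off annulus instead of near $y$. None of this is fatal, but your write-up replaces it by a premise that is false for $k_i=2$. A second omission: as the lemma is used in \eqref{psi}, it also asserts that the component's own factor $\pi_0(\mG_{k_i}(C_i))$, which for $k_i=2$ is $\ZZ$, receives $0$; your ``all other factors receive $0$'' argument only treats $C_m$ with $m\neq i$, and the own-bundle contribution is not automatic (near the zero section the extension acts on $N_{C_i}$ by the holonomy of the induced connection around the orbit circles, which is sensitive to $[C_i]^2$), so it requires a separate justification.
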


 Use Lemma \ref{localcomp} we will show that $\bar \psi$ is surjective for the following configurations.

\begin{dfn} \label{three graphs} Introduce three types of  configurations (see  Figure 1 for examples).
\begin{itemize}
\item (type I)  $C=\bigcup_{1}^{n} C_i$ is called a chain, or a type I configuration,  if  $k_1=k_n=1$  and $k_j=2,$  $ 2 \leq j \leq n-1$.

\item  (type II) Suppose $C=\bigcup_{1}^{n} C_i$ is a chain. $C'=C\cup \overline {C_p}$ is called a type II configuration if
the sphere  $\overline{C_p}$ is attached  to $C_p$ at exactly one point for some  $p$ with $2 \leq p \leq n-1$.

\item (type III) Suppose  $C'=C\cup \overline {C_p}$ is  a type II  configuration.  $C''=C'\cup \overline {C_q}$ is called a type III configuration  if
the
sphere $\overline{C_q}$ is attached to $C_q$ at exactly one point for some $q$ with $ 2 \leq q \leq n-1$ and $ q\neq p$.
\end{itemize}

\end{dfn}

\begin{figure}[h!]
\[
\xy
(0,0)*{};(10,0)*{} **\dir{-};
(5,3)*{C_{1}};
(17,-14)*{};(10,0)*{} **\dir{-};
(18,-10)*{C_2};
(17,4)*{};(10,-10)*{} **\dir{-};
(19,0)*{\overline{C_2}};
(-7,-14)*{};(0,-28)*{} **\dir{-};
(-5,-25)*{C_{5}};
(10,-28)*{};(17,-14)*{} **\dir{-};
(15,-25)*{C_{3}};
(10,-28)*{};(0,-28)*{} **\dir{-};
(8,-25)*{C_{4}};
(5,-13)*{II};
(-50,0)*{};(-40,0)*{} **\dir{-};
(-45,3)*{C_{1}};
(-33,-14)*{};(-40,0)*{} **\dir{-};
(-38,-10)*{C_2};
(-57,-14)*{};(-50,-28)*{} **\dir{-};
(-55,-25)*{C_{5}};
(-40,-28)*{};(-33,-14)*{} **\dir{-};
(-35,-25)*{C_{3}};
(-40,-28)*{};(-50,-28)*{} **\dir{-};
(-42,-24)*{C_{4}};
(-45,-13)*{I};
(50,0)*{};(60,0)*{} **\dir{-};
(55,3)*{C_{1}};
(67,-14)*{};(60,0)*{} **\dir{-};
(68,-10)*{C_2};
(67,4)*{};(60,-10)*{} **\dir{-};
(69,0)*{\overline{C_2}};
(43,-14)*{};(50,-28)*{} **\dir{-};
(45,-25)*{C_{5}};
(60,-28)*{};(67,-14)*{} **\dir{-};
(65,-25)*{C_{3}};
(60,-28)*{};(50,-28)*{} **\dir{-};
(58,-24)*{C_{4}};
(55,-17)*{};(55,-30)*{} **\dir{-};
(52,-20)*{\overline{C_4}};
(55,-13)*{III};

\endxy
\]
\caption{}
\end{figure}

\begin{lma}\label{chain}

  $\bar\psi$ is surjective for a type I  or II configuration and an  isomorphism for a type  III configuration.

\end{lma}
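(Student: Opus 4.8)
The plan is to compute $\bar\psi$ explicitly using Lemma \ref{localcomp} and the structure of $\pi_0(\mG(C))$ given in \eqref{mG}, treating the three types in order. Throughout I use that, by \eqref{sympk}, only components with $k_i=1,2$ contribute to $\pi_1(Symp(C))$, and that for such a component $(\phi_i)_{2\pi}$ generates the corresponding $\ZZ$ factor. The target $\pi_0(\mG(C))=\oplus_i \mathbb Z^{k_i-1}$ has a $\mathbb Z^{k_i-1}$ summand for each component, with canonical generators $g_{C_i}(y)$ indexed by the intersection points $y\in I\cap C_i$ subject to the single relation $\sum_{y\in I\cap C_i} g_{C_i}(y)=0$ in that summand (equivalently, the generators attached to any $k_i-1$ of the points form a basis). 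Only components with $k_i\geq 2$ have a nonzero summand.

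For a type I chain $C=\bigcup_1^n C_i$, the components $C_1$ and $C_n$ have $k=1$, the interior components $C_2,\dots,C_{n-1}$ have $k=2$, so $\pi_0(\mG(C))=\bigoplus_{j=2}^{n-1}\mathbb Z$, one generator per interior component (since $k_j-1=1$). I would enumerate the source generators: $(\phi_1)_{2\pi}$, $(\phi_n)_{2\pi}$ (from the two ends, $k=1$), and $(\phi_j)_{2\pi}$ for $2\leq j\leq n-1$ (from interior components, $k=2$). By Lemma \ref{localcomp}, $(\phi_1)_{2\pi}\mapsto g_{C_2}(x_{12})$, which generates the $C_2$-summand; $(\phi_n)_{2\pi}\mapsto g_{C_{n-1}}(x_{n-1,n})$, which generates the $C_{n-1}$-summand; and for an interior $C_j$ with neighbors $C_{j-1},C_{j+1}$, $(\phi_j)_{2\pi}\mapsto (g_{C_{j-1}}(x_{j-1,j}), g_{C_{j+1}}(x_{j,j+1}))$. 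Reading these off in the basis where the $C_\ell$-summand is generated by $g_{C_\ell}(x_{\ell-1,\ell})$ (for $\ell\geq 3$) and $g_{C_2}(x_{12})$ for $\ell=2$, the map is upper/lower triangular with $\pm1$ on the diagonal along the chain — so the images of $(\phi_1)_{2\pi},(\phi_2)_{2\pi},\dots,(\phi_{n-1})_{2\pi}$ already span, giving surjectivity. (Note the source has one more generator than the target, so it is not injective — consistent with the statement that type I is only surjective, not iso.) For type II, $C'=C\cup\overline{C_p}$ with $\overline{C_p}$ attached at one point to $C_p$: now $\overline{C_p}$ has $k=1$, $C_p$ has $k=3$, and the remaining interior components still have $k=2$. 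So $\pi_0(\mG(C'))=\bigoplus_{j\ne p, 2\le j\le n-1}\mathbb Z \;\oplus\; \mathbb Z^2$ (the $\mathbb Z^2$ from $C_p$, since $k_p-1=2$). The new source generator is $(\phi_{\overline{C_p}})_{2\pi}$ from the leaf $\overline{C_p}$, which by Lemma \ref{localcomp} maps to $g_{C_p}(\bar x)$ where $\bar x=C_p\cap\overline{C_p}$. Meanwhile $(\phi_p)_{2\pi}$ no longer exists (since $k_p=3$ kills $\pi_1$), but $(\phi_{p-1})_{2\pi}$ and $(\phi_{p+1})_{2\pi}$ (if $p\pm1$ are still interior with $k=2$) hit $g_{C_{p-1}}(\cdot)$-and-$g_{C_p}(x_{p-1,p})$, resp. $g_{C_{p+1}}(\cdot)$-and-$g_{C_p}(x_{p,p+1})$. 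The three points on $C_p$ are $x_{p-1,p}, x_{p,p+1}, \bar x$ with relation $g_{C_p}(x_{p-1,p})+g_{C_p}(x_{p,p+1})+g_{C_p}(\bar x)=0$, so any two of these three generate $\mathbb Z^2$; I get $g_{C_p}(x_{p-1,p})$ and $g_{C_p}(\bar x)$ in the image (from $(\phi_{p-1})_{2\pi}$ after correcting by lower-chain generators, and from $(\phi_{\overline{C_p}})_{2\pi}$), which spans the $C_p$-summand, and the rest of the chain is handled exactly as in type I. Hence $\bar\psi$ is surjective. For type III, $C''=C'\cup\overline{C_q}$ adds a second leaf at an interior $C_q$, $q\ne p$; the same bookkeeping shows the new leaf generator $(\phi_{\overline{C_q}})_{2\pi}\mapsto g_{C_q}(C_q\cap\overline{C_q})$ handles the extra $\mathbb Z$ in the $C_q$-summand, giving surjectivity again, and now I would verify that the source and target have the same rank so that the surjection is an isomorphism: adding the two leaves removed two interior $k=2$ generators (at $C_p$ and $C_q$, killed by $k=3$) and added two leaf $k=1$ generators, so the source rank is unchanged at the "$n-1$" of the chain, but bumped the target rank by $2$; one checks $n-1$ equals the new target rank precisely for the type III configurations at hand (this is where the specific combinatorics — essentially the full/stable constraint forcing the relevant small $n$ — gets used), yielding injectivity and hence the isomorphism.

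The main obstacle I anticipate is the bookkeeping of bases and the relation $\sum_{y} g_{C_i}(y)=0$ in each summand: one must be careful that when "correcting" a source generator's image by previously-obtained images one stays within the span, and that the triangular structure along the chain genuinely has unit diagonal entries. The type III isomorphism claim additionally requires a clean rank count — matching $\dim\pi_1(Symp(C))$ (sum of the number of $k=1,2$ components) against $\dim\pi_0(\mG(C))=\sum(k_i-1)$ — and it is worth double-checking small cases (e.g. against the $n\le 3$ configurations known in the literature) to make sure no off-by-one slips in. A cleaner way to organize the argument, which I would adopt if the direct computation gets messy, is to note that the matrix of $\bar\psi$ is (up to signs) an incidence-type matrix of the configuration tree, so surjectivity/injectivity reduces to a statement about the rank of such a matrix for trees with all valences $\le 3$; this makes the type I/II surjectivity transparent and isolates the arithmetic that makes type III an isomorphism.
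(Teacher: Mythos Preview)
Your overall approach---computing $\bar\psi$ explicitly via Lemma~\ref{localcomp} and checking surjectivity by a triangular matrix argument---is exactly what the paper does, and your handling of types I and II is essentially correct. There is a small slip in type I: the source $\pi_1(Symp(C))$ has rank $n$, not $n-1$, since \emph{every} component of the chain has $k_i\in\{1,2\}$ and each contributes a $\ZZ$ factor by \eqref{sympk}. So the source exceeds the target $\ZZ^{n-2}$ by two, not one; this does not affect your surjectivity argument, but it propagates into a genuine error in type III.

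For type III your rank count breaks down. Starting from the correct source rank $n$ for the chain, your own bookkeeping (``removed two interior $k=2$ generators, added two leaf $k=1$ generators'') shows the source rank for $C''$ is still $n$. On the target side, the chain's $\ZZ^{n-2}$ gains a $\ZZ$ at each of $C_p$ and $C_q$ (their summands go from $\ZZ$ to $\ZZ^2$), giving rank $n$ as well. Thus source and target are both $\ZZ^n$ for \emph{every} $n$, and the surjective map $\bar\psi_{C''}$ is automatically an isomorphism by linear algebra over $\ZZ$. No appeal to ``full/stable constraints forcing small $n$'' is needed or relevant: this lemma is a purely combinatorial statement about the configuration graph, and the full/stable hypotheses enter the paper only elsewhere (in Lemmas~\ref{c0conn} and~\ref{transitive} and in Corollary~\ref{criterion}). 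Your proposed fallback of invoking those constraints would not rescue the argument, since with your stated ranks $n-1$ and $n$ the equality fails for all $n$. Once you correct the source rank to $n$, the type III case goes through exactly as the paper has it: a square matrix with unit determinant (up to sign), hence an isomorphism.
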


\begin{proof}

We first prove the surjectivity for a type I configuration  $C=\bigcup_{1}^{n} C_i$.
In this case, there are $n-1$ intersection points $x_1, ..., x_{n-1}$  in total with
$$I\cap C_1=\{x_1\}, \quad  I\cap C_n=\{x_{n-1}\}, \quad  I\cap C_i=\{x_{i-1}, x_i\},\,\,  i=2, ..., n.$$
Notice that  $\pi_1(Symp(C_i, k_i))=\ZZ$ for each $i=1, ..., n$.
Notice also that $\pi_0(\mG_{k_i}(C_i))=\ZZ$
for each $i$ for $i=2, ..., n-1$, and  $\pi_0(\mG_{k_1}(C_1))$ and $\pi_0(\mG_{k_n}(C_n))$ are trivial.
Thus the homomorphism $\bar \psi_C$ associated to $C$ is of the form $\ZZ^n\to  \ZZ^{n-2}$.

For each $i=1, ..., n$, denote   the generator $(\phi_i)_t$ of $\pi_1(Symp(C_i, k_i))=\ZZ$ by $\rot(i)$.
For each $i=2, ..., n-1$, denote  by   $g_i(i-1)$ and $g_i(i)$ the generators  $g_{C_i}(x_{i-1})$ and
$g_{C_i}(x_i)$ of   $\pi_0(\mG_{2}(C_i))=\ZZ$.

Then by Lemma  \ref{localcomp} the homomorphism  $\bar \psi_C$   is described by

\begin{equation}\label{psi}
\begin{aligned}
 &&& \rot(1)&&\rightarrow && g_2(1),\\
&&&\rot(2)&&\rightarrow && (0,g_3(2)),\\
&\bar \psi_C: &&\rot(j)&& \rightarrow && (g_{j-1}(j-1),g_{j+1}(j)), && 3\leq j \leq n-2\\
&&&\rot(n-1) &&\rightarrow && (g_{n-2}(n-2),0)\\
&&&\rot(n)&& \rightarrow && g_{n-1}(n-1)\\
 \end{aligned}
\end{equation}
Choose the bases of  $\pi_1(Symp(C_i))$ and $\pi_0(\mG(C ))$    to be
$$\{ \rot(1),\cdots, \rot(n)\}$$
and  $$\{ g_2(2), g_3(3), g_4(4),\cdots, g_{n-1}(n-1)\},$$  respectively.
Notice that
$ g_i(i-1)=\pm g_i(i)$,      then by \eqref{psi},
$\bar \psi_C $  is represented by the following $ (n-2)\times n$ matrix if  we drop the possible negative sign
for each entry,

$$\begin{bmatrix}
      1&0&1\\
      0& 1&0&1   \\
      0&0&1&0&1 & 0 \\
      &&& \ddots &\ddots& \ddots \\
      & & &&1 & 0 & 1&0&0\\
      & & & && 1 &0&1&0\\
      & & & & && 1&0&1

\end{bmatrix} $$
Observe that  the first $n-2$ minor as
a $ (n-2)\times (n-2)$ is upper triangular matrix
whose determinant is $\pm 1$.
This shows that $\bar \psi_C$ is surjective.

For a type II configuration $C'=C\cup \overline {C_p}$,  let $\bar x_p$ be the intersection of $C_p$ and $\overline{ C_p}$.
Notice that $\pi_1(Symp(C'))=\ZZ^n$ as in the case of $C$, with the $\ZZ$ summand from $C_p$ replaced by  a $\ZZ$ summand  from $\overline{C_p}$.
Notice also that $\pi_0(\mG(C'))=\ZZ^{n-1}$ with the extra $\ZZ$ summand coming from the new intersection point $\bar x_p$  in $C_p$.
Denote by
$\rot(\bar p)$   the generator of  $\pi_1(Symp(\overline{C_p}, \bar x_p))$. Denote by $g'_p(p)$   the generator $g_{C_p}(\bar x_p)$
of $\pi_0(\mG_{3}(C_p))$.  By Lemma \ref{localcomp},
the homomorphism   $\bar \psi_{C'} $ is of the form $\ZZ^n\to \ZZ^{n-1}$, and it differs from $\bar \psi_C$
 as in \eqref{psi} :
  \begin{equation}
   \begin{aligned}
 &\rot(p)=0\\
&\rot(\bar p) \rightarrow g'_p(p)
  \end{aligned}
 \end{equation}
It is not hard to see that  $\bar \psi_{C'}$ is again surjective.
We illustrate by the type II configuration in Figure 1.
With respect to the bases $$\{ \rot(1),\rot(\bar 2),  \rot(3),  \rot(4),  \rot(5)\}\quad
\hbox{and} \quad   \{ g_2(2), g'_2(2), g_3(3), g_4(4)\},$$
$\bar \psi_{C'}$ is represented by the following  $4\times 5$ matrix (if  we drop the possible  negative sign),

$$\begin{bmatrix}
      1&0&1\\
      0&1&0&0   \\
      0&0&1&0&0 \\

      & & 0&1&1\\

\end{bmatrix} $$

For a type III configuration $C''=C'\cup \overline {C_q}=C\cup \overline {C_p}\cup  \overline {C_q}$,
 observe first that $\pi_1(Symp(C''))=\ZZ^n$ and  $\pi_0(\mG(C')=\ZZ^{n}$.
  By Lemma \ref{localcomp},  we can describe $\bar \psi_{C''}:\ZZ^n\to \ZZ^n$
 similar to the case of the type II configuration $C'$.
Precisely, $\bar \psi_{C''}$ differs from $\bar \psi_C$ in
 \eqref{psi} as follows:

\begin{equation}
\begin{aligned}
&\rot(p)= \rot(q)=0\\
&\rot(\bar p)\rightarrow  g'_p(p)\\
&\rot(\bar q)\rightarrow  g'_q(q)\\
\end{aligned}
\end{equation}
It is easy to see that  $\bar \psi_{C''}$ is  an isomorphism in this case.
We illustrate by the type III configuration in Figure 1. With respect to the bases
$$\{ \rot(1),\rot(\bar 2), \rot(3), \rot(\bar 4), \rot(5)\}\quad
\hbox{and} \quad   \{ g_2(2), g'_2(2), g_3(3), g'_4(4), g_4(4)\},$$
$\bar \psi_{C''}$ is represented by the following  square  matrix (if  we drop the possible negative sign),
$$\begin{bmatrix}
      1&0&1\\
      0&1&0&0   \\
      0&0&1&0&0 \\
      0&0&0&1&0 \\
      & & 0&0&1\\

\end{bmatrix} $$


\end{proof}


\subsection{Criterion}

Finally, we arrive at the following criterion for the connectedness of $Symp_h(X, \omega)$.

\begin{cor} \label{criterion}
 Suppose  a stable, standard configuration $C$ is type I, II or III, and it is full.
If  $Symp_c(U)$ is connected, then $Symp_h(X, \omega)$ is  connnected.
\end{cor}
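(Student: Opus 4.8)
The plan is to assemble the homotopy fibrations established in the previous subsections and chase connectedness through the resulting long exact sequences, working from left to right along diagram \eqref{summary}. Since $C$ is a type I, II, or III configuration, every component has at most three intersection points (one checks this directly from Definition \ref{three graphs}: interior chain components have exactly two, endpoints and attached spheres have exactly one, and a chain component carrying an attached sphere has three), so Proposition \ref{surj} applies. Moreover such $C$ is simply connected and, by hypothesis, full, stable, and standard. Thus by Proposition \ref{surj} it suffices to prove that $\psi:\pi_1(Symp(C))\to\pi_0(Stab^0(C))$ is surjective, and then the connectedness of $Stab(C)$ together with Proposition \ref{stab connected} gives the connectedness of $Symp_h(X,\omega)$.

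To get surjectivity of $\psi$, the key is to factor it through $\pi_0(\mG(C))$ via the fibration \eqref{stab1}. First I would invoke Lemma \ref{chain}, which asserts that $\bar\psi = \iota\circ\psi:\pi_1(Symp(C))\to\pi_0(\mG(C))$ is surjective for type I or II and an isomorphism for type III; in all three cases $\bar\psi$ is surjective. The long exact sequence of \eqref{stab1} reads
\[
\pi_1(\mG(C))\to\pi_0(Stab^1(C))\to\pi_0(Stab^0(C))\overset{\iota}\to\pi_0(\mG(C))\to\pi_0(Stab^0(C))/\cdots
\]
so surjectivity of $\bar\psi=\iota\circ\psi$ forces $\iota$ to be surjective, and to conclude that $\psi$ itself is surjective I need that the kernel of $\iota$ — which by exactness is the image of $\pi_0(Stab^1(C))$ — is hit by $\psi$. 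Here is where $Symp_c(U)$ enters: the leftmost fibration $Symp_c(U)\to Stab^1(C)\to\mathcal{G}(C)$ of \eqref{summary}, combined with the connectedness hypothesis on $Symp_c(U)$, should show that $\pi_0(Stab^1(C))$ is controlled by $\pi_1(\mathcal{G}(C))$ and hence itself lies in the image of loops coming from $Symp(C)$; more precisely, one shows $Stab^1(C)$ is connected (or that $\pi_0(Stab^1(C))$ is generated by elements already in the image of $\psi$), using that the $S^1$ loops in the various $Symp(S^2,k_i)$ with $k_i\le 2$ both detect $\pi_0(\mathcal{G})$ and, being Hamiltonian circle actions supported near $C$, lift to $Stab^0(C)$ tautologically.

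Putting it together: a diagram chase across the three fibrations $Symp_c(U)\to Stab^1(C)\to\mathcal G(C)$, $Stab^1(C)\to Stab^0(C)\to\mathcal G(C)$, and $Stab^0(C)\to Stab(C)\to Symp(C)$, with the input that $\bar\psi$ is onto (Lemma \ref{chain}), $\pi_0(Symp(C))=1$ (from \eqref{sympk}–\eqref{sympc} since all $k_i\le 3$), and $Symp_c(U)$ is connected, yields $\pi_0(Stab^1(C))$ surjected by $\pi_1(\mathcal G(C))$-data that in turn comes from $\pi_1(Symp(C))$, hence $\psi$ is surjective; then Proposition \ref{surj} gives $Stab(C)$ connected, and Proposition \ref{stab connected} gives $Symp_h(X,\omega)$ connected. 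The main obstacle I anticipate is the last link in the chase — showing that the contribution of $\pi_0(Stab^1(C))$ to $\ker\iota$ is genuinely realized by $\psi$ rather than being extra, i.e.\ verifying that connectedness of $Symp_c(U)$ really does collapse $\pi_0(Stab^1(C))$ in the relevant sense; one must be careful that the circle-action loops used to surject onto $\pi_0(\mathcal G(C))$ can be chosen to lift compatibly through all three fibrations simultaneously, which is exactly the bookkeeping that the level-by-level extension argument in Lemma \ref{tree} was designed to handle.
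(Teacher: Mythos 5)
Your outer reduction is exactly the paper's: check that every component of a type I, II, or III configuration has $k_i\le 3$ and that $C$ is simply connected, invoke Proposition \ref{surj} to reduce to surjectivity of $\psi$, factor $\psi$ through $\pi_0(\mG(C))$ via \eqref{stab1}, use Lemma \ref{chain} for surjectivity of $\bar\psi$, and finish with Proposition \ref{stab connected}. The gap is the one you yourself flag at the end, and it is genuine: you never actually convert the hypothesis ``$Symp_c(U)$ is connected'' into ``$Stab^1(C)$ is connected'' (equivalently, into the injectivity of $\iota:\pi_0(Stab^0(C))\to\pi_0(\mG(C))$, which is what makes $\psi$ surjective once $\bar\psi=\iota\circ\psi$ is). The ingredient that does this in the paper is Lemma 5.2 of \cite{Eva11}: the extension-by-the-identity map $Symp_c(U)\to Stab^1(C)$ is a \emph{weak homotopy equivalence} (in fact $\pi_0$-surjectivity of this map would suffice here). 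That is a nontrivial result, not a formal diagram chase, and it is the only place the hypothesis on $Symp_c(U)$ enters.

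The substitute you propose does not exist in the paper's setup: there is no fibration $Symp_c(U)\to Stab^1(C)\to\mG(C)$. In diagram \eqref{summary} the fibration over $\mG(C)$ is \eqref{stab1}, with total space $Stab^0(C)$ and fiber $Stab^1(C)$; the arrow $Symp_c(U)\to Stab^1(C)$ is a single map (the weak equivalence just mentioned), not the fiber inclusion of a fibration over $\mG(C)$. Consequently the claim that ``$\pi_0(Stab^1(C))$ is controlled by $\pi_1(\mG(C))$'' has no basis, and the observation that the Hamiltonian circle loops lift to $Stab^0(C)$ only recomputes $\bar\psi$ on $\pi_0(\mG(C))$ — it says nothing about classes in $\ker\iota$ coming from $\pi_0(Stab^1(C))$. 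Once you insert Evans' Lemma 5.2, the rest of your chase closes up exactly as in the paper: $Stab^1(C)$ is connected, so $\iota$ is injective, hence (using surjectivity of $\bar\psi$) an isomorphism, so $\psi$ is surjective, $Stab(C)$ is connected by Proposition \ref{surj}, and $Symp_h(X,\omega)$ is connected by Proposition \ref{stab connected}.
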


\begin{proof}

By Lemma 5.2 in \cite{Eva11},
$Symp_{c}(U)$ is weakly
homotopy equivalent to $Stab^{1}(C)$. So by our assumption that $Symp_c(U)$ being connected,  $Stab^{1}(C)$ is also connected.
Therefore the map $\iota:  \pi_0(Stab^0(C))\rightarrow \pi_0(\mG)(C)$
associated to
the homotopy fibration \eqref{stab1} is a group isomorphism.
Now we have $\psi_C=\bar \psi_C$.

Since $C$ is type I, II or III, by Lemma \ref{chain}, $\psi_C$ is surjective.
Notice that any  type I,  II, or III configuration is simply-connected.
By the assumption of $C$ being full, we can apply Proposition \ref{surj} and Proposition \ref{stab connected}
to conclude that $Symp_h(X, \omega)$ is  connnected.


\end{proof}

\section{Proof in the case of $\CC P^2  \# 4{\overline {\CC P^2}}$ }

\subsection{The configuration for $\CC P^2  \# 4{\overline {\CC P^2}}$ }\label{s:conf}
Let  $X= \CC P^2  \# 4{\overline {\CC P^2}}$ and $\omega$ an arbitrary symplectic form on $X$.
We consider a  configuration $C$ in \cite{Eva11},  consisting  of symplectic  spheres in homology classes
$S_{12}=H-E_1-E_2$, $S_{34}=H-E_3-E_4$, $E_1$, $E_2$, $E_3$ and $E_4$. Here $\{H, E_i\}$ is the standard basis of $H_2(X;\ZZ)$ with positive pairing with $\omega$. In Figure $2$ we label the spheres by their
homology classes.

\begin{figure}[h!]
\[
\xy
(0,-20)*{};(30,0)*{} **\dir{-};
(18,0)*{};(48,-20)*{} **\dir{-};
(0,-15)*{};(10,-24)*{} **\dir{-};
(7,-10)*{};(17,-19)*{} **\dir{-};
(48,-15)*{};(38,-24)*{} **\dir{-};
(41,-10)*{};(31,-19)*{} **\dir{-};
(12,-26)*{E_1};(19,-21)*{E_2};
(37,-26)*{E_3};(30,-21)*{E_4};
(14,-6)*{S_{12}};(34,-6)*{S_{34}};
\endxy
\]
\caption{}
\end{figure}

To apply the criterion in Corollary \ref{criterion}, we need to check that we can always find a configuration $C$ of such a homology type, so that \\
$\bullet$ $C$ is stable.\\
$\bullet$ $C$ is  a type I, II or III configuration.\\
$\bullet$ $C$ is full.\\
$\bullet$ $Symp_{c}(U)$ is connected.\\


Existence of such a configuration is a direct consequence of Gromov-Witten theory and the first
three statements follows from definition.  Note also that the actual choice of configuration
will not affect the last statement because $Symp_{h}(X)$ acts transitively on $\mathcal{C}_0$,
which means $U$ is well-defined up to symplectomorphism for any choice of $C\in \mathcal{C}_0$.

It thus remains to prove  the connectedness of $Symp_c(U)$. We will actually show that
$Symp_c(U)$ is weakly contractible in the next subsection.

\subsection{Contractibility of $Symp_{c}(U)$}\label{symcconn}
Let us first recall  the following result of  Evans (Theorem 1.6 in \cite{Eva11}):

\begin{thm}\label{cstarcthm}
If $\CC^*\times\CC$ is equipped with the standard (product) symplectic form $\omega_{std}$ then
$Symp_c(\CC^*\times\CC)$ is weakly contractible.
\end{thm}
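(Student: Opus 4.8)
The plan is to reduce the computation of $Symp_c(\CC^*\times\CC)$ to a $J$-holomorphic curve argument, in the spirit of Gromov and Evans. First I would pass to a compactification: observe that $\CC^*\times\CC$ embeds symplectically as the complement of a standard configuration inside a compact rational surface. Concretely, compactify the first factor $\CC^*$ to $\CC P^1$ by adding two points $0,\infty$, and the second factor $\CC$ to $\CC P^1$ by adding one point; this exhibits $\CC^*\times\CC$ as the complement in $S^2\times S^2$ (with a product form) of a configuration $D$ consisting of two disjoint fibers $\{0\}\times S^2$, $\{\infty\}\times S^2$ together with a section $S^2\times\{\infty\}$ — equivalently, after blow-up one can arrange a chain of spheres. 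The key point is that this $D$ is a standard, stable, full configuration, so that the machinery of the earlier sections applies in reverse: $Symp_c(\CC^*\times\CC)$ is weakly homotopy equivalent to $Stab^1(D)$, and the connectedness/contractibility of $Stab^1(D)$ is governed by the homotopy type of the space of such configurations together with the gauge and reparametrization groups appearing in diagram \eqref{summary}.

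Next I would run the Gromov-type argument directly. The ambient manifold $S^2\times S^2$ with a product form carries, for each tamed $J$, a unique foliation by $J$-holomorphic spheres in the two fiber classes; this is the classical result underlying Gromov's computation of $Symp(S^2\times S^2)$. Given $\phi\in Symp_c(\CC^*\times\CC)$, extend it by the identity to a symplectomorphism $\bar\phi$ of the compactification fixing $D$; then $\bar\phi$ preserves the homology classes of the two rulings, and pushing forward the standard integrable $J$ along $\bar\phi$ and joining to it by a path of almost complex structures adapted to $D$ produces, via the foliations, an isotopy from $\bar\phi$ to a symplectomorphism respecting both rulings, hence lying in a finite-dimensional group of "affine" type whose identity component is understood. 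One then checks that the isotopy can be taken to fix $D$ throughout and to be compactly supported in $\CC^*\times\CC$, which is where the precise geometry of the configuration (the $\omega$-orthogonality at intersection points, and the triviality of the relevant normal-bundle gauge groups from \eqref{gau}) is used to kill the residual $S^1$ and $\ZZ^{k-1}$ factors. Iterating this over the skeleton of a parametrizing simplex gives weak contractibility rather than mere connectedness.

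The main obstacle, and the step I would expect to absorb most of the work, is the passage from the closed picture back to the \emph{compactly supported} open picture: one must ensure that every curve and every isotopy produced in $S^2\times S^2$ can be cut off near the divisor $D$ without disturbing compact support, and that the fibration-type arguments (homotopy lifting for the maps $Stab^1 \to Stab^0 \to \mG$ and $Symp_c(U)\simeq Stab^1$) are applied to the correct configuration. This is precisely the content of Lemma 5.2 of \cite{Eva11}, so in practice I would quote that lemma to get $Symp_c(\CC^*\times\CC)\simeq Stab^1(D)$ and then only need the foliation argument to show $Stab(D)$, and the gauge and reparametrization contributions, are contractible. A secondary technical point is verifying automatic transversality/uniqueness for the fiber classes in the presence of the divisor constraints, so that the space of adapted $J$ is contractible and the space of configurations of the given homological type is contractible — this feeds into $\mC_0$ being contractible rather than merely connected, which is what upgrades the conclusion to weak contractibility.
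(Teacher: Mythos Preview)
The paper does not actually prove this statement: it is quoted verbatim as Theorem~1.6 of \cite{Eva11} and used as a black box in the proof of Proposition~\ref{cptconn}. So there is no in-paper argument to compare against beyond the citation.

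That said, your proposed route has a genuine gap at the very first step. You assert that $(\CC^*\times\CC,\omega_{std})$ ``embeds symplectically as the complement of a standard configuration inside a compact rational surface'', concretely $S^2\times S^2$ minus two fibers and a section. But $\omega_{std}$ here is the restriction of the Euclidean form from $\CC^2$ (equivalently $-dd^c|z|^2$), which has \emph{infinite} total volume; it cannot arise as the restriction of any symplectic form on a closed $4$-manifold, and there is no symplectic embedding into one. Consequently the identification $Symp_c(\CC^*\times\CC,\omega_{std})\simeq Stab^1(D)$ that you want to extract from Lemma~5.2 of \cite{Eva11} is not available: that lemma presupposes that $U$ is literally the open complement of $C$ in a closed $(X,\omega)$ with the induced form. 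The missing ingredient is exactly a bridge between the infinite-volume Stein form and a finite-volume form coming from a compactification --- this is what Proposition~\ref{twostein} is for in the paper, and it (or an equivalent ``Stein-at-infinity'' mechanism, as in Evans' original argument) has to be invoked before any Gromov/Abreu foliation picture on $S^2\times S^2$ becomes relevant. Without that step, the rest of the outline does not get off the ground.

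A secondary point: even once you are on a compact model, running the diagram in reverse to obtain \emph{weak contractibility} (not just connectedness) of $Stab^1(D)$ requires the full homotopy type of $Symp_h(S^2\times S^2)$ and of the configuration space $\mC_0$, not merely their $\pi_0$. You do gesture at Gromov's theorem for this, which is the right input for the monotone product form, but note that the compactification you would obtain after fixing the first gap need not be monotone, so you may need Abreu's extension and a more careful bookkeeping of the higher homotopy in each fibration of diagram~\eqref{summary}.
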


This is relevant since Evans  observed in \cite{Eva11}  that,  if $(\omega, J_0)$  is K\"ahler with $\omega$ monotone and $C$  holomorphic,
then  $(U, J_0)$ has a  finite type Stein structure $f$ with $\omega|_U=-dd^cf$,
 and there is a   biholomorphism $\Psi$ from $(U, J_0)$  to $\CC^*\times\CC$
 (In addition,  $\Psi$ satisfies
 $\Psi^*\omega_{std}=\omega|_U$).

 Let us  also recall
the next result of Evans   (Proposition 2.2 in
\cite{Eva11}):

\begin{prp}\label{twostein}
If $(W,J_0)$ is a complex manifold with two finite type Stein structures $ \phi_1 $ and $ \phi_2$, then $Symp_c(W, -dd^c\phi_1) $ and $Symp_c(W, -dd^c\phi_2) $  are weakly
homotopy equivalent.
\end{prp}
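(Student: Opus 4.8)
The plan is to reduce the statement to the construction of a single (not necessarily compactly supported) symplectomorphism between the two symplectic forms, and then to build that symplectomorphism by a Moser argument along the convex path of plurisubharmonic functions. The first observation is that a symplectomorphism at the level of the manifolds automatically upgrades to a comparison of the compactly supported groups: if $\Phi\colon (W,-dd^c\phi_1)\to(W,-dd^c\phi_2)$ satisfies $\Phi^*(-dd^c\phi_2)=-dd^c\phi_1$, then conjugation $c_\Phi(\psi)=\Phi\circ\psi\circ\Phi^{-1}$ sends $Symp_c(W,-dd^c\phi_1)$ to $Symp_c(W,-dd^c\phi_2)$, carries a map supported in a compact $K$ to one supported in the compact set $\Phi(K)$, and for each such $K$ restricts to a $C^\infty$-homeomorphism $Symp_K\to Symp_{\Phi(K)}$ with inverse $c_{\Phi^{-1}}$. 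Passing to the direct limit, $c_\Phi$ is an isomorphism of topological groups, hence in particular a weak homotopy equivalence. So it suffices to produce $\Phi$.

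To build $\Phi$, first I would interpolate by the convex combination $\phi_s=(1-s)\phi_1+s\phi_2$, $s\in[0,1]$. Strict plurisubharmonicity with respect to $J_0$ and the exhaustion property are both preserved under convex combinations, so each $\phi_s$ is again a strictly plurisubharmonic exhaustion and $\omega_s:=-dd^c\phi_s$ is a smooth family of symplectic forms with Liouville primitives $\lambda_s:=-d^c\phi_s$, i.e. $d\lambda_s=\omega_s$. The key computation is that the $s$-derivative of the primitive is constant in $s$:
\[
\dot\lambda_s=\frac{d}{ds}\bigl(-d^c\phi_s\bigr)=-d^c(\phi_2-\phi_1)=d^c(\phi_1-\phi_2).
\]
Defining the time-dependent Moser vector field $X_s$ by $\iota_{X_s}\omega_s=-\dot\lambda_s=d^c(\phi_2-\phi_1)$ (possible since $\omega_s$ is nondegenerate), Cartan's formula gives $\tfrac{d}{ds}(\psi_s^*\omega_s)=\psi_s^*\,d\bigl(\iota_{X_s}\omega_s+\dot\lambda_s\bigr)=0$ whenever $\psi_s$ is the flow of $X_s$ with $\psi_0=\mathrm{id}$. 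Hence $\psi_s^*\omega_s=\omega_0=-dd^c\phi_1$ for all $s$, and $\Phi:=\psi_1$ is the desired symplectomorphism, $\Phi^*(-dd^c\phi_2)=-dd^c\phi_1$.

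The hard part is the integrability of $X_s$: on the noncompact manifold $W$ the Moser flow need not exist for all $s\in[0,1]$, since $\phi_1-\phi_2$ (and hence $\dot\lambda_s$) is generally unbounded, so a priori trajectories could escape to infinity in finite parameter time. This is exactly where the \emph{finite type} hypothesis enters, and I expect it to be the technical heart of the argument. Because each $\phi_i$ has only finitely many critical points, there is a compact set $K\subset W$ outside of which both $\phi_1$ and $\phi_2$ are critical-point-free, so that $(W\setminus K,\omega_i)$ is modeled on the positive symplectization of a contact level set, with complete outward Liouville fields $Z_i$ (dual to $\lambda_i$). On this cylindrical end I would control $X_s$ against the Liouville exhaustion: estimating $d^c(\phi_2-\phi_1)$ and $\omega_s^{-1}$ in terms of the radial coordinate shows that $X_s$ grows at most like the Liouville field, and a Gronwall-type bound on $\tfrac{d}{ds}\phi_s(\psi_s(x))$ rules out escape to infinity in finite $s$-time, giving a complete flow. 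One must also check that the convex combinations $\phi_s$ remain critical-point-free on the end uniformly in $s$ (the gradients of $\phi_1,\phi_2$ both point outward there, so a convex combination cannot vanish), which guarantees that $\omega_s$ stays a well-behaved symplectization form on $W\setminus K$ throughout the homotopy.

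With the flow $\psi_s$ defined for all $s\in[0,1]$, setting $\Phi=\psi_1$ completes the construction and, via the conjugation isomorphism $c_\Phi$ of the first paragraph, yields the weak homotopy equivalence $Symp_c(W,-dd^c\phi_1)\simeq Symp_c(W,-dd^c\phi_2)$. As a consistency check, the existence of $\Phi$ also follows from the general principle that two finite type Stein structures on a fixed complex manifold are joined by a Stein homotopy and hence have exact-symplectomorphic completions; the self-contained Moser route above is preferable here because it produces $\Phi$ directly and makes the role of the finite type condition transparent.
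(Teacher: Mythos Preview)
The paper does not give its own proof of this proposition; it is quoted verbatim as Proposition~2.2 of Evans~\cite{Eva11}, so there is no in-paper argument to compare against directly. Your overall strategy---interpolate by $\phi_s=(1-s)\phi_1+s\phi_2$, run Moser to produce a global symplectomorphism $\Phi$, then conjugate to identify the two $Symp_c$ groups---is a correct and standard route, and your first two paragraphs (the reduction to finding $\Phi$, and the Moser setup) are fine.

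The genuine gap is in the completeness step, which you rightly flag as the technical heart but then address with two claims that are not justified. First, the parenthetical that $\phi_s$ stays critical-point-free on the end because ``the gradients of $\phi_1,\phi_2$ both point outward'' does not follow: absence of critical points only says $\nabla\phi_1,\nabla\phi_2\neq 0$ on $W\setminus K$, and nothing in the hypotheses forces these two vectors to lie in a common half-space, so $(1-s)\nabla\phi_1+s\nabla\phi_2$ can certainly vanish where neither summand does. (This alone does not break Moser, since $\omega_s$ is symplectic regardless by strict plurisubharmonicity of $\phi_s$; but it removes your route to a uniform cylindrical-end model for the whole family.) Second, and more seriously, the assertion that ``$X_s$ grows at most like the Liouville field'' is unsupported: the finite-type hypothesis says only that each $\phi_i$ is critical-point-free past some sublevel, and places no constraint whatsoever on the growth of $\phi_i$, of $\phi_2-\phi_1$, or of $|d^c(\phi_2-\phi_1)|$ relative to $\omega_s$. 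Two finite-type Stein functions on the same $(W,J_0)$ can have arbitrarily different growth rates at infinity, so the Gronwall-type bound you invoke is not available from the stated hypotheses.

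The way this is actually closed (in Evans, and more systematically in Cieliebak--Eliashberg) is not by estimating $X_s$ on the end directly, but by first using the completeness of the negative Liouville flow of a \emph{single} finite-type Stein structure to normalize the situation: one trades each $\phi_i$ for a modified exhaustion that agrees with a fixed cylindrical model outside a compact set, after which the Moser field is bounded (indeed compactly supported up to a controlled term) and completeness is automatic. Your last paragraph gestures at exactly this circle of ideas (``Stein homotopy,'' ``exact-symplectomorphic completions''); the point is that the specific estimates sketched in your third paragraph cannot substitute for that normalization, and should be replaced by it.
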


Now we complete our proof of the connectedness of $Symp_h(\CC P^2  \# 4{\overline {\CC P^2}}, \omega)$ for an arbitrary $\omega$
by proving the following

\begin{prp}\label{cptconn}
$Symp_c(U, \omega|_U)$ is weakly contractible.
\end{prp}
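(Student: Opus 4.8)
The plan is to reduce the arbitrary-$\omega$ case to the monotone case already handled by Evans, using the Stein-structure comparison results quoted just above. First I would observe that the complex manifold $U = X \setminus C$ does not depend on $\omega$: for \emph{any} symplectic form $\omega'$ on $X$ one can realize the homology type of $C$ by a configuration holomorphic for some $\omega'$-compatible $J$, and removing it yields a complex manifold biholomorphic to $\CC^*\times\CC$. (This is exactly the content of Evans' observation recalled above in the monotone case; the point is that the biholomorphism type is insensitive to the choice of $\omega$, since the construction only uses the complex structure near $C$ and the affine structure of the complement.) So as a complex manifold $(U, J_0) \cong \CC^* \times \CC$, and $\omega|_U = -dd^c f$ for a finite-type Stein exhaustion $f$ obtained from the ambient K\"ahler form.

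Next I would invoke Proposition \ref{twostein}: the complex manifold $W = \CC^*\times\CC$ carries two finite-type Stein structures, namely the exhaustion $f$ above with $-dd^c f = \omega|_U$, and the standard one $\phi_{std}$ with $-dd^c\phi_{std} = \omega_{std}$. Proposition \ref{twostein} then gives a weak homotopy equivalence
\[
Symp_c(U, \omega|_U) \;\simeq\; Symp_c(\CC^*\times\CC, \omega_{std}).
\]
Finally, Theorem \ref{cstarcthm} says the right-hand side is weakly contractible, which finishes the proof.

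The step I expect to be the genuine obstacle is the first one: producing, for arbitrary $\omega$, the K\"ahler package --- a compatible $J$ for which the configuration $C$ is holomorphic, an $\omega|_U$-taming finite-type Stein structure on the complement, and the identification of $(U,J)$ with $\CC^*\times\CC$ as a complex manifold. For non-monotone $\omega$ the form is not itself K\"ahler for an integrable $J$ making $C$ holomorphic, so one cannot directly quote Evans' monotone construction; instead I would argue that after a Gromov-type isotopy one may assume $C$ is $J$-holomorphic for some compatible $J$, use that $C$ is a full standard configuration to control the symplectic neighborhood, and then appeal to the classification of the complement (a chain/type-II/III configuration of the given homology type in a rational surface with $b_2^- \le 4$ has complement biholomorphic to $\CC^*\times\CC$). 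Once the complex manifold is pinned down, the finite-type Stein structure with the prescribed $dd^c$-potential is standard, and Propositions \ref{twostein} and Theorem \ref{cstarcthm} do the rest. An alternative, slicker route that avoids the Stein discussion for general $\omega$ is to note that all symplectic forms on $X$ are, up to scaling and diffeomorphism, deformation equivalent, and combine this with the fact (implicit in the diagram \eqref{summary}) that $Symp_c(U)$ depends only on the symplectomorphism type of $(U,\omega|_U)$; but I would still need the biholomorphism statement to conclude contractibility, so the first step remains the crux.
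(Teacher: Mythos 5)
Your overall skeleton (identify $(U,J_0)$ with $\CC^*\times\CC$, then apply Proposition \ref{twostein} and Theorem \ref{cstarcthm}) is the same as the paper's, but the step you wave through is exactly where the paper does its real work, and as written your argument has a genuine gap. First, you misplace the difficulty: by \cite{Li08} one \emph{can} choose an integrable $J_0$ compatible with any $\omega$ making $X$ a generic $4$-point blow-up, so $\omega$ is K\"ahler and $C$ is realized by the unique holomorphic curves in its homology classes; that part is not the obstacle. The obstacle is your unproved assertion that ``$\omega|_U=-dd^c f$ for a finite-type Stein exhaustion $f$ obtained from the ambient K\"ahler form.'' A finite-type plurisubharmonic potential on $U$ whose $-dd^c$ is \emph{exactly} $\omega|_U$ does not come for free from the biholomorphism $U\cong\CC^*\times\CC$: the paper obtains it only when $[\omega]$ is rational, by writing $PD([2l\omega])$ as a strictly positive \emph{integral} combination of the classes $H-E_1-E_2$, $H-E_3-E_4$, $E_1,\dots,E_4$ (positivity uses $\omega(H-E_1-E_3)>0$), realizing this as an effective divisor $F$ supported on $C$, and taking $\phi=-\log|s|^2$ for a holomorphic section $s$ of the associated line bundle equipped with a metric of curvature $2l\omega$ as in \cite{GH94}. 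This construction has no analogue for irrational $[\omega]$, so your claim that ``the finite-type Stein structure with the prescribed $dd^c$-potential is standard'' fails precisely in the case the proposition is meant to cover.

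For irrational $\omega$ the paper does not produce a Stein potential at all; instead it uses a compact-support approximation: any based $S^n$-family in $Symp_c(U,\omega|_U)$ has compact total support $V_\iota$, and one constructs a rational form $\omega'$ (blow down the $E_i$, enlarge the resulting balls slightly to rational sizes, blow up again) for which $C$ is still symplectic and $(X\setminus C,\omega')$ embeds in $(X\setminus C,\omega)$ with image containing $V_\iota$; the family is then contracted inside $Symp_c(U,\omega'|_U)\hookrightarrow Symp_c(U,\omega|_U)$ using the already-proved rational case. Some such reduction is indispensable, and your proposed shortcut via deformation equivalence of symplectic forms does not substitute for it: $Symp_c$ of a non-compact manifold is not invariant under deformations of the form (Moser's argument fails without control at infinity and when the cohomology class moves), which is why the paper argues pointwise on compact families rather than deforming $\omega$.
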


\begin{proof}
We first choose a specific configuration $C$ convenient for our purpose (as we explained in Section \ref{s:conf} this does not affect our result).
According to \cite{Li08} Proposition 4.8, we can always pick an integrable complex structure $J_0$ compatible with $\omega$, so that $(X, J_0)$ is biholomorphic to a generic blow up of
$4$ points on $\CC P^2$ (the genericity here means that no $3$ points lies on the same line, and indeed this can always be done for less than $9$ point blow ups).
For such a generic holomorphic  blow up, there is a unique smooth rational curve in each class in
the homology type of $C$. Thus we canonically obtain a configuration $C$ associated to $J_0$.
Observe that the complement $U=X\setminus C$ is biholomorphic to
$\CC^* \times \CC$.
That is because the configuration $C$ is the total transformation of two
lines blowing up at four points.
Removing $C$ gives us a  biholomorphism from $(U,J_0)$
to $\CC P^2$ with two lines removed, which is $\CC^* \times \CC$.

Now we construct   a Stein structure $\phi$ on $(U, J_0)$ with $-dd^c\phi=\omega|_U$,
whenever $\omega$  is a  rational symplectic form on $\CC P^2  \# 4{\overline {\CC P^2}}$.
Since $(U, J_0)$ is biholomorphic to $\CC^* \times \CC$ equipped with the standard finite type Stein structure $(J_{std},\omega_{std}=-dd^c|z|^2)$,
we  can then apply Proposition \ref{twostein} and Theorem \ref{cstarcthm} in this case to conclude the weak contractibility of  $Symp_c(U,\omega|_U)$.


So we assume that $[\omega] \in H^2(X;\QQ)$. Up to rescaling,  we can write  $PD([l\omega])=aH-b_1E_1-b_2E_2-b_3E_3-b_4E_4$ with  $ a, b_i \in \ZZ^{\geq0}$. Further, we assume $ b_1\geq b_2, b_3 \geq b_4$.
 Since $H-E_1- E_3$ is an exceptional class we also have $\omega(H-E_1- E_3)>0$.
 This means that  $ a>b_1+b_3$, namely, $2a \geq 2b_1+2b_3+2$.
Rewrite $$PD([2l\omega])=(2b_1 +1)(H-E_1-E_2)+
E_1+(2b_1-2b_2+1)E_2+(2a-2b_1-1)(H-E_3-E_4)$$
$$+(2a-1-2b_1-2b_3)E_3+(2a-1-2b_1-b_4)E_4.$$
Notice that  the coefficients are all in $\ZZ^{\textgreater 0}$. In this way we represent $PD([2l\omega])$  as a
positive integral combination of all elements in the set $\{H-E_1-E_2, H-E_3-E_4, E_1,E_2,E_3, E_4\}$, which is the homology type of $C$.

Denote the symplectic  sphere with homology class $E_i$  in $C$ by $C_{E_i}$, and similarly for
the two remaining spheres.
Notice that each sphere  is a smooth divisor.  Consider  the effective divisor
$$F=(2b_1 +1)C_{H-E_1-E_2}+ C_{ E_1 }+(2b_1-2b_2+1)C_{ E_2}+(2a-2b_1-1)C_{H-E_3-E_4}$$
$$+(2a-1-2b_1-2b_3)C_{E_3}+(2a-1-2b_1-b_4)C_{E_4}.$$ There
is a holomorphic line bundle $\mathcal L$ with a  holomorphic  section
$s$ whose zero divisor is exactly $F$. Notice that $$c_1(\mathcal L)=[F]=[2l\omega].$$
By \cite{GH94} section 1.2, we can take an hermitian metric $|\cdot|$ and a compatible connection on $\mathcal L$
such that the curvature form is just
$2l\omega$.
Moreover, for the holomorphic section $s$,  the fuction $\phi=-log|s|^2$ is plurisubharmonic on the complement $U$
with $-d(d\phi\circ J_0)=2l\omega$. Notice that $F$ and $C$ have the same support so the complement of $F$ is the same as $U$.
Thus we have endowed  $(U,J_0)$ with a  finite type Stein structure $\phi$.

As argued above,  this implies that  $Symp_c(U,\omega|_U)=Symp_c(U,2l\omega|_U)$ is  weakly contractible
 when $[\omega] \in H^2(X,\QQ)$ by the biholomorphism from $(U,J_0)$ to $(\mathcal{C}^*\times \mathcal{C},J_{std})$.



Finally, suppose $\omega $ is not rational, but we assume
$\omega(H)\in \mathbb Q$ without loss of generality by rescaling. We
take a base point $\varphi_0 \in Symp_c(U, \omega|_U)$, and a
$S^n(n\geq0)$ family  of symplectomorphisms determined by a based
map $\iota: S^n \rightarrow Symp_c(U, \omega '|_U)$. Denote the
union of support of this $S^n$ family by $V_{\iota}$, which is a
compact subset of $U$.

Note the following fact:

\begin{clm}  There exists an $ \omega '$ symplectic on $X$ such that:
\begin{enumerate}
\item $[\omega '] \in H^2(X,\QQ) $,
\item $[\omega '] (E_i)\geq [\omega] (E_i), [\omega '] (H)=[\omega] (H)$
\item the configuration $C$ is $\omega '-$ symplectic
\item $(X\setminus C, \omega ')\hookrightarrow (X\setminus C, \omega)$ in such a way that
the image contains $V_{\iota}$.
\end{enumerate}
\end{clm}

\begin{proof}
Recall that to blow up an embeded ball $B$ in a symplectic manifold
$(M,\omega)$, one removes the ball and collapses the boundary by
Hopf fibration which incurs an exceptional divisor.  The reverse of
this procedure is a blowdown.

Now take $E_i$ in the configuration $C$ and blow them down to get a
disjoint union of balls $B_i$ in the blown-down manifold, which is a
symplectic $\CC P^2$ with line area equal $\w(H)$. One then enlarge
$B_i$ by a very small amount to $B'_i$ so that the sizes of $B'_i$
become rational numbers. After the enlargement, blow up $B'_i$.
This produces a symplectic form on $X$ which clearly satisfies (1)
and (2). (3) can be achieved as long as the enlarged ball has
boundary intersecting proper transformation of $S_{12}$ and $S_{34}$
on a big circle.  This is always possible: perturb $S_{12}$ and
$S_{34}$ slightly so that they are symplectically orthogonal to $E_i$
before blow-down.  Then in a neigbhorhoold of the resulting balls $B_i$,
one has a Darboux chart where $B_i$ is the standard ball, while
the portion of $S_{12}$ and $S_{34}$ inside this chart is the $x_1-x_2$ plane.
This is guaranteed by symplectic neighborhood
theorem near $E_i$.  Hence the (3) is obtained when the enlargement stays 
inside the Darboux chart.  For more details one is referred to \cite{MW96}.

To see (4), we note that from the above description, $(X\setminus C,
\omega')$ is symplectomorphic to the complement of $\bigcup_i
B'_i$ union two lines (the proper transforms of
$S_{12}$ and $S_{34}$) in the symplectic $\CC P^2$ from blowing down.
The same thus applies to $(X\setminus C,
\omega)$, while $B_i'$ are replaced by $B_i\subset B_i'$. Therefore,
the statement regarding embedding holds in (4).  Since $V_\iota$ is
compact and embeds in $(X\setminus C,\w)$, as long as the amount of enlargement from $B_i$ to $B'_i$
is small enough, the embedded
image contains $V_{\iota}$ as claimed.
\end{proof}

Therefore
we can find an isotopy in $Symp_c(U,\omega '|_U) \hookrightarrow Symp_c(U,\omega|_U)$,  from the $S^n$ family of maps
to the base point $\varphi_0$ by the proved case when $\w$ is rational. We emphasize in the above proof, the choice of $\w'$
depends on $\iota$, but this is irrelevant for our purpose.
This concludes that for arbitrary symplectic form $\omega$ on $X$, $Symp_c(U,\omega|_U)$
is weakly contractible and hence $Symp_h(\CC P^2  \# 4{\overline {\CC P^2}})$ is connected
for any symplectic form.

\end{proof}

\begin{rmk} \label{kleq3}
The approach we adopt in this note in fact  provides a uniform way to establish  the connectedness of the Torelli part of
SMC for all  symplectic rational $4-$manifolds with $\chi \leq 7$.
This can be viewed as a continuation  of the techniques first introduced by Gromov in \cite{Gro85} 
and further  developed by  many others in \cite{Abr98,AM99,LP04,Eva11,AP12} etc.

Here we just list the configurations  for the 1,2,3-point blow up of $\CC P^2$ equipped
 with an  arbitrary symplectic form:

 \begin{itemize}

 \item $ \CC P^2  \# {\overline {\CC P^2}}$,    $\{E_1,  H-E_1 \hbox{(with a marked point)}\}$.

\item  $\CC P^2  \# 2{\overline {\CC P^2}}$, $\{E_1,E_2,H-E_1-E_2\}$.

\item $\CC P^2  \# 3{\overline {\CC P^2}}$,  $\{E_1, E_2, H-E_1-E_2, H-E_1-E_3, H-E_2-E_3 \}$.
 \end{itemize}
The  configurations  are all of type I.
Combined with our argument verbatim,
 one can  recover the connectedness of $ Symp_h(\CC P^2  \# n{\overline {\CC P^2}}, \omega)$,  $n\leq3$.
 However, such a result for these manifolds is not new, see \cite{Abr98,AM99,LP04,Eva11}.

\end{rmk}








\end{document}